\author{Christian Bär}
\author{Lashi Bandara}
\title{A hitchhiker's guide to first-order elliptic boundary value problems}
\date{\today}
\address{Christian Bär, 
Institut für Mathematik,
Universität Potsdam, 
D-14476 Potsdam, Germany
}
\urladdr{\href{https://www.math.uni-potsdam.de/baer}{https://www.math.uni-potsdam.de/baer}}
\email{\href{mailto:christian.baer@uni-potsdam.de}{christian.baer@uni-potsdam.de}}
\address{Lashi Bandara,
Deakin University, 
Melbourne Burwood Campus, 
221 Burwood Highway, Burwood, Victoria, Australia, 3125
}
\urladdr{\href{https://lashi.org}{https://lashi.org}}
\email{\href{mailto:lashi.bandara@deakin.edu.au}{lashi.bandara@deakin.edu.au}}
\keywords{Boundary value problems, elliptic operator of first order, Dirac-type operator, Rarita-Schwinger operator, elliptic boundary conditions, (pseudo-) local boundary conditions, Atiyah-Patodi-Singer boundary conditions, completeness of an operator, regularity of solutions, Fredholm property, index theory}
\subjclass[2020]{35J56, 58J32}
\def\colour{\colour}
\newcommand{\sym}{\upsigma}
\newcommand{\rest}[1]{{{\lvert_{}}_{}}_{#1}}
\newcommand{\R}{\mathbb{R}}
\newcommand{\C}{\mathbb{C}}
\newcommand{\N}{\mathbb{N}}
\newcommand{\In}{\mathbb{Z}}
\newcommand{\dbrac}[1]{\left\{#1\right\}}
\newcommand{\ad}{\ast}
\newcommand{\id}{\mathrm{id}}
\newcommand{\modulus}[1]{|#1|}
\newcommand{\norm}[1]{\| #1 \|}			
\newcommand{\set}[1]{\dbrac{#1}}
\newcommand{\Lp}[2][{}]{{\rm L}^{#2}_{\rm #1}}		
\newcommand{\Ck}[2][{}]{{\rm C}^{#2}_{\rm #1}}		
\newcommand{\Hard}[2][{}]{{\rm H}^{#2}_{\rm #1}}		
\newcommand{\SobH}[2][{}]{\Hard[#1]{#2}}
\DeclareMathOperator{\Ind}{index}
\newcommand{\embed}{\hookrightarrow}		
\newcommand{\dom}{\mathrm{dom}}
\newcommand{\intersect}{\cap} 
\newcommand{\close}[1]{\overline{#1}}
\DeclareMathOperator{\spt}{spt}		
\newcommand{\inprod}[1]{\left\langle #1 \right\rangle}	
\newcommand{\ext}{\mathcal{E}}
\newcommand{\APS}{\mathrm{APS}}
\renewcommand{\epsilon}{\varepsilon}
\newcommand{\Hinfty}{$\mathrm{H}^\infty$}
\newcommand{\dM}{\partial M}
\renewcommand{\emptyset}{\varnothing}
\newcommand{\Bmatch}{B_{\rm M}}
\newcommand{\Poiss}{\mathcal{K}} 
\newcommand{\Dt}{\mathscr{D}}
 \newtheorem{theorem}{Theorem}
 \newtheorem{lemma}{Lemma}
 \newtheorem{corollary}{Corollary}
 \theoremstyle{definition}
 \newtheorem{definition}{Definition}
 \newtheorem{example}{Example}
 \newtheorem{remark}{Remark}
\begin{document}

\maketitle
\begin{abstract}
To empower the mathematical hitchhiker wishing to use operator methods in geometry and topology, we present this user's guide to first-order elliptic boundary value problems.
Existence, regularity, and Fredholmness are discussed for general first-order elliptic operators on  manifolds with compact boundary. 
The focus is on a very general class of elliptic boundary conditions, which contain those that are pseudo-local as a special case, yielding the relative index theorem.
A new characterisation of a subclass of elliptic boundary conditions is also given.

\end{abstract}
\tableofcontents

\parindent0cm
\setlength{\parskip}{\baselineskip}

\newcounter{alphsection}
\renewcommand{\thealphsection}{\Alph{alphsection}}
\newcommand{\alphsection}[1]{%
   \refstepcounter{alphsection}%
   \section*{\thealphsection\quad #1}%
   \markright{#1}%
}

\section*{Introduction}

The index theorem of Atiyah-Patodi-Singer \cite{APS-Ann, APS1, APS2, APS3} for Dirac operators on smooth compact manifolds with boundary is heralded today as a major mathematical achievement of the twentieth century.
This result, beyond its immediate value, highlighted  non\-local boundary conditions as the quintessential type in the study of first-order boundary value problems. 
Their study has been a primary focus in the decades since, with a particular focus given to pseudo-local boundary conditions, arising as the range of a pseudo-differential projector of order zero.
 
Although it is beyond the scope of this paper to provide an exhaustive list of contributions, \cite{BB12, BBGuide, BBC, BLZ, B, BL2001, G96, Melrose, RS, S01, S04} by Bär, Ballmann, Booß-Bavnbek, Boutet de Monvel, Brüning, Carron, Chen, Grubb, Lesch, Melrose, Rempel, Schulze, and Zhu are a list of references which has direct relevance to what we present here.
A typical assumption in all of these papers is that the adapted boundary operator, which can be thought of as the trace of the operator to the boundary, can be chosen self-adjoint. 
In particular, \cite{BB12} provides a description of \emph{all} boundary conditions. 
That is, the boundary trace map is extended to the whole of the maximal domain. 
Furthermore, regularity, Fredholmness, and index theory are discussed in a broadly applicable context. 

In \cite{BBan} by Bär-Bandara, the technical requirement in \cite{BB12} (and in other earlier works) requiring a self-adjoint adapted operator, is dispensed.
In fact, results of \cite{BBan} are very general - they can be applied to \emph{general} first-order elliptic operators on manifolds with compact boundary.
The methods employed in \cite{BBan} deviate from earlier works using Fourier circle methods. 
Instead, modern \Hinfty-functional calculus methods intertwined with real-variable harmonic analysis techniques are used to tame non-self-adjoint adapted boundary operators. 
These techniques are considerably technical in nature.

Let us now arrive at the present paper, which we introduce with the following analogy. 
The roadside hitchhiker, in order to travel to their desired destination, need not know about mechanical aspects of motor vehicles nor do they even need to know how to drive.
Much in the same way, the mathematical hitchhiker should be able to utilise results in \cite{BBan} to achieve their desired mathematical destiny, without the burden of labouring through technicalities. 
It is in this spirit that this ``user's guide'' to first-order boundary value problems has been conceived.

The structure of this paper is as follows. 
In Section~\ref{Sec.Setup}, the minimal and maximal extensions, along with a \emph{standard setup} \ref{Hyp.StdFirst}-\ref{Hyp.StdLast} under which results are obtained, are given.
Examples of significance are provided which may assist the hitchhiker in their own calculations.
The short  Section~\ref{Sec.Complete} is dedicated to discussing and presenting a very natural method to help the hitchhiker to verify the so-called \emph{completeness} assumption~\ref{Hyp.Complete}.

Section~\ref{Sec.EBC} contains the central objects of this paper - elliptic boundary conditions.
Specialising results of \cite{BBan} to a frequently encountered smooth setting, the notion of an $\infty$-elliptic boundary condition is given in Definition~\ref{Def.ER}.
This is an important notion which characterises such boundary conditions in a \emph{graphical} form, an incredibly flexible and powerful tool for analysis of problems in topology and geometry.
In order to utilise this notion, results pertaining to boundary regularity, the adjoint boundary condition, and the relationship to classical pseudo-local boundary conditions are presented. 

Section~\ref{Sec.Fred} introduces the notion of coercivity which guarantee $\infty$-elliptic boundary conditions to yield a Fredholm operator.
Related to these ideas, Section~\ref{Sec.Match} introduces the \emph{matching} boundary condition, an example of an $\infty$-elliptic boundary condition which is not pseudo-local. 
This is a crucial boundary condition used to obtain the relative index theorem in this generality, a result also included in this section.

In Section~\ref{Sec.HighReg} the notion of a $k$-elliptic boundary condition (in the sense of \cite{BBan}*{Definition 2.11}) is characterised by the regularity of solutions subjected to that boundary condition. 
This is a new and useful characterisation which was recently obtained and not included in \cite{BBan}.

Lastly, Appendix~\ref{appendix.RS} contains a calculation of the ellipticity of the Rarita-Schwinger operator.
Much of the development in \cite{BBan} was motivated by the desire to analyse this operator, which arises naturally in geometry. 
This is not of Dirac-type and, in fact, adapted boundary operators induced from the Rarita-Schwinger operator are generally non-self-adjoint. 
This calculation is included to provide scaffolding to potential calculations which the hitchhiker may need to perform in their own context.

\subsection*{Acknowledgements} 
This work was financially supported by the Schwerpunktpro\-gramm~2026 ``Geometry at Infinity'' funded by \emph{Deutsche Forschungsgemeinschaft}.
L.B. would like to thank Magnus Goffeng for useful conversations about regularity.

\section{Setup and preliminaries}
\label{Sec.Setup} 
Throughout, $M$ will be a smooth  manifold with smooth boundary.
We write $\Ck{k}(M;E)$ to denote the space of $k$-times continuously differentiable sections of $E$, $\Ck[c]{k}(M;E)$ the subspace  of compactly supported sections (possibly nonzero on the boundary), and  $\Ck[cc]{\infty}(M;E)$ the subspace of $\Ck[c]{\infty}(M;E)$ whose sections are supported on the interior of $M$.

We fix a smooth measure $\mu$ on $M$. 
By this, we mean a smooth positive section of the density bundle of $M$.
Given a Hermitian vector bundle $(E, h^E) \to M$, we naturally obtain the Hilbert space $\Lp{2}(M;E)$ of square integrable sections.

When $M$ is compact and without boundary, the Sobolev spaces, with respect to $\Lp{2}$, are denoted by $\SobH{\alpha}(M;E)$, where $\alpha\in\R$.
These are Hilbert spaces.
For $\alpha<\beta$, there is a continuous embedding $\SobH{\beta}(M;E) \embed \SobH{\alpha}(M;E)$.
In particular, $\SobH{0}(M;E)=\Lp{2}(M;E)$ and, for $\alpha\in\N$, the elements of $\SobH{\alpha}(M;E)$ are the sections whose distributional derivatives up to order $\alpha$ lie in $\Lp{2}(M;E)$.
For each $\alpha\in\R$, extending the $\Lp{2}$-scalar product in one argument and restricting in the other, we obtain a perfect pairing $\inprod{\cdot,\cdot}_{\SobH{\alpha}\times \SobH{-\alpha}}\colon \SobH{\alpha}(M;E)\times \SobH{-\alpha}(M;E)\to\C$.

We fix a first-order linear differential operator $D\colon \Ck{\infty}(M;E) \to \Ck{\infty}(M;F)$, where $(F,h^F)\to M$ is another Hermitian bundle.
There is a unique formal adjoint $D^\dagger\colon \Ck{\infty}(M;F) \to \Ck{\infty}(M;E)$.
The  maximal and minimal extensions  of $D$  are defined by
$$ 
D_{\max} := \big(D^\dagger|_{\Ck[cc]{\infty}}\big)^\ad\quad\text{ and }\quad D_{\min} := \close{\big(D|_{\Ck[cc]{\infty}}\big)},
$$ 
where $^\ad$ denotes the $\Lp{2}$-adjoint and $\close{\phantom{x}}$ the closure in $\Lp{2}(M;E)$.
The domains $\dom(D_{\max})$ and $\dom(D_{\min})$ are Banach spaces with respect to the graph norm $u\mapsto\norm{u}_D=\norm{u}_{\Lp{2}} + \norm{Du}_{\Lp{2}}$.
Similarly, we define $D^\dagger_{\max}$ an $D^\dagger_{\min}$ by interchanging the roles of $D$ and $D^\dagger$.
The principal symbol of $D$ is denoted by $\sym_D(\xi)$, which is characterised by $D(fu)=\sym_D(df)u+fDu$.

The standard setup in which we work is the following:
\begin{enumerate}[label=(S\arabic*), labelwidth=!, labelindent=2pt, leftmargin=21pt] 
\label{StdSetup}
\item \label{Hyp.StdFirst} 
	$M$ is a smooth manifold with compact smooth boundary $\dM$;
\item $\mu$ is a smooth measure on $M$;
\item \label{Hyp.Inward}
$T\in \Ck{\infty}(\dM;TM)$ is an interior pointing vector field along $\dM$; 
\item \label{Hyp.Bundles}
$(E,h^E),\ (F,h^F) \to M$ are Hermitian vector bundles over $M$;
\item \label{Hyp.D}
$D$ is a first-order elliptic differential operator mapping sections of $E$ to those of $F$; 
\item \label{Hyp.StdLast}\label{Hyp.Complete}
$D$ and $D^\dagger$ are complete, i.e., compactly supported sections in $\dom(D_{\max})$ are dense in $\dom(D_{\max})$ with respect to the graph norm $\norm{\cdot}_D$ and similarly for $D^\dagger$.
\end{enumerate}

Note that $\dM$ is assumed to be compact but we do not assume that $M$ is compact.
So the theory applies if $M$ is the complement of a relatively compact smooth domain in $\R^n$, for example.

The vector field $T$ induces a covector field $\tau\in \Ck{\infty}(\dM;T^*M)$ characterised by the conditions $\tau(T)=1$ and $\tau|_{T\dM}=0$.

The measure $\mu$ on $M$ together with $T$ induce a smooth measure $\nu$ on $\dM$ given by $\nu(\cdots) = \mu(T,\cdots)$.

\begin{example}\label{Ex:Dirac}
Let $M$ be a Riemannian manifold.
The Riemannian metric induces a measure $\mu$ on $M$ and an interior pointing unit normal field $T$ along $\dM$.

Let $(E,h^E),\ (F,h^F) \to M$ be Hermitian vector bundles over $M$ of the same rank.
A first-order differential operator $D$ mapping sections of $E$ to those of $F$ is called a \emph{Dirac-type operator} if its principal symbol satisfies the Clifford relations $\sym_D(\xi)^*\sym_D(\eta)+\sym_D(\eta)^*\sym_D(\xi) = 2g(\xi,\eta)\id_E$ for all $\xi,\eta\in T^*_xM$ and $x\in M$.
In particular, $\sym_D(\xi)^*\sym_D(\xi)=|\xi|^2\id_E$ so that $\sym_D(\xi)$ is injective for all $\xi\neq0$.
Since the ranks of $E$ and $F$ are the same, $\sym_D(\xi)$ is invertible.
Hence Dirac-type operators are elliptic.
Theorem~\ref{Thm:cherwolf} will show that $D$ and $D^\dagger$ are complete if the Riemannian metric of $M$ is complete.
\end{example}

\begin{example}\label{Ex:RS}
Let $M$ be a Riemannian manifold of dimension $n\ge3$.
Assume the setup of Example~\ref{Ex:Dirac} and let $D$ be a Dirac-type operator between $E$ and $F$.
We define a Dirac-type operator $\Dt$ between $T^*M\otimes E$ and $T^*M\otimes F$ by 
$$
\Dt(\eta\otimes e) = \eta\otimes De + \sum_i \nabla_{e_i}\eta \otimes \sym_D(e^i)e.
$$
Here $\nabla$ is the Levi-Civita connection on $T^*M$ and $\{e_i\}$ is a local orthonormal tangent frame while $\{e^i\}$ is its dual cotangent frame.
This definition is independent of the choice of frame and yields a well-defined first-order differential operator.
The principal symbol of $\Dt$ is given by $\sym_{\Dt}(\xi) = \id\otimes \sym_D(\xi)$.
Thus, $\Dt$ is also a Dirac-type operator.

Define 
\begin{align*}
\gamma\colon T^*M\otimes E\to F, &\quad\gamma(\xi\otimes v) = \sym_D(\xi) v \text{ and} \\
\iota\colon F\to T^*M\otimes E, &\quad\iota(f) = \tfrac{1}{n}\sum_i e^i\otimes \sym_D(e^i)^*f.
\end{align*}
Straightforward computation shows
\begin{align}
\gamma\circ\iota = \id_F, \label{eq.RS.gammaiota}\\
\iota^* = \tfrac{1}{n}\gamma. \label{eq.RS.iota*}
\end{align}
Equation~\eqref{eq.RS.gammaiota} shows that $\iota\circ\gamma\colon T^*M\otimes E\to T^*M\otimes E$ is a projection onto the image of $\iota$.
By Equation~\eqref{eq.RS.iota*}, the projection $\iota\circ\gamma$ is self-adjoint.
Since $\iota$ is injective, the kernel of this projection is the same as the kernel of $\gamma$.
We define 
$$
E^{\nicefrac32} := \ker(\gamma) \subset T^*M\otimes E.
$$
We have the orthogonal decomposition
$$
T^*M\otimes E = \iota(F) \oplus E^{\nicefrac32}.
$$
There is an analogous orthogonal decomposition $T^*M\otimes F = \tilde\iota(E) \oplus F^{\nicefrac32}$ where
\begin{align*}
\tilde\gamma\colon T^*M\otimes F\to E, &\quad\tilde\gamma(\xi\otimes f) = \sym_D(\xi)^* f, \text{ and} \\
\tilde\iota\colon E\to T^*M\otimes F, &\quad\tilde\iota(v) = \tfrac{1}{n}\sum_i e^i\otimes \sym_D(e^i) v \quad\text{ and} \\
F^{\nicefrac32} &= \ker(\tilde\gamma).
\end{align*}
The \emph{Rarita-Schwinger operator} $D_{\nicefrac32}\colon C^\infty(M;E^{\nicefrac32})\to C^\infty(M;F^{\nicefrac32})$ is defined by 
$$
D_{\nicefrac32} = (\id_{T^*M\otimes F} - \tilde\iota\circ\tilde\gamma) \Dt|_{E^{\nicefrac32}}.
$$
The Rarita-Schwinger operator $D_{\nicefrac32}$ is not of Dirac type but in Appendix~\ref{appendix.RS} we show that it is elliptic.
Theorem~\ref{Thm:cherwolf} will show that $D_{\nicefrac{3}{2}}$ and $D_{\nicefrac{3}{2}}^\dagger$ are complete if the Riemannian metric of $M$ is complete.
\end{example}

Imposing boundary conditions amounts to considering extensions of $D_{\min}$ contained in $D_{\max}$.
To understand these extensions, it is necessary to define the boundary trace map on $\dom(D_{\max})$ as well as characterise $\dom(D_{\min})$.
This is provided by the following theorem. 

\begin{theorem}[The trace theorem \cite{BBan}*{Thm.~2.3~(i) and (ii)}]
\label{Thm:CTrace} 
Under the assumptions~\ref{Hyp.StdFirst}--\ref{Hyp.StdLast}, $\Ck[c]{\infty}(M;E)$ is dense in $\dom(D_{\max})$  with respect to the graph norm and the restriction map to the boundary
$$u \mapsto u \rest{\partial M}\colon \Ck[c]{\infty}(M;E) \to \Ck{\infty}(\partial M;E)$$
has a unique bounded extension
$$ u \mapsto u\rest{\partial M}\colon \dom(D_{\max}) \to \SobH{-\frac12}(\partial M;E).$$ 
The kernel of this extension is precisely $\dom(D_{\min})$.
\end{theorem}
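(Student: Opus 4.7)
The plan is to prove the three claims—density, boundedness of the trace, and identification of the kernel—in sequence, using the completeness hypothesis~\ref{Hyp.Complete} together with a collar neighborhood decomposition near $\dM$ supplied by the inward vector field $T$.

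\textbf{Density.} I would begin from \ref{Hyp.Complete}, which reduces matters to approximating a compactly supported $u \in \dom(D_{\max})$ by elements of $\Ck[c]{\infty}(M;E)$ in graph norm. Away from $\dM$, standard Friedrichs mollification works: the commutator of $D$ with a mollifier is bounded uniformly in $\Lp{2}$ by Friedrichs' lemma for first-order operators. Near $\dM$, I would fix a collar $\dM \times [0,\epsilon)$ along the integral curves of $T$ and, before mollifying, translate $u$ slightly into the interior in the $t$-direction so that the shifted section vanishes in a neighborhood of $\dM$. This allows a joint mollification in the $t$ and tangential directions without boundary obstruction. Pasting the interior and collar approximations together via a partition of unity yields the desired smooth approximation.

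\textbf{The trace bound.} For $u,v \in \Ck[c]{\infty}$, Green's formula reads
$$\inprod{Du, v}_{\Lp{2}(M;F)} - \inprod{u, D^\dagger v}_{\Lp{2}(M;E)} = -\int_{\dM} \inprod{\sym_D(\tau)u\rest{\dM}, v\rest{\dM}}\,d\nu.$$
Given a test section $\phi \in \SobH{\nicefrac12}(\dM;F)$, I would construct an extension $V_\phi \in \dom(D^\dagger_{\max})$ satisfying $V_\phi\rest{\dM} = \phi$ and $\norm{V_\phi}_{D^\dagger} \le C\norm{\phi}_{\SobH{\nicefrac12}}$, obtained as a cutoff-multiple of a standard $\SobH{\nicefrac12}$-to-$\SobH{1}$ extension inside the collar. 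Inserting $V_\phi$ for $v$ in Green's formula bounds $|\inprod{\sym_D(\tau)u\rest{\dM},\phi}|$ by $C\norm{u}_D \norm{\phi}_{\SobH{\nicefrac12}}$. Invertibility of $\sym_D(\tau)$ (from ellipticity of $D$) then upgrades this to $u\rest{\dM} \in \SobH{-\nicefrac12}(\dM;E)$ with the claimed estimate, and extension to all of $\dom(D_{\max})$ follows from the density established in step one.

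\textbf{Kernel characterization.} The inclusion of $\dom(D_{\min})$ into the kernel of the trace map is immediate, since $\Ck[cc]{\infty}$-sections have zero trace and the trace map is continuous. The reverse inclusion is the main obstacle. For $u \in \dom(D_{\max})$ with $u\rest{\dM} = 0$, I would introduce cutoffs $\chi_\epsilon$ vanishing in an $\epsilon$-collar of $\dM$ so that $\chi_\epsilon u$ can be approximated by $\Ck[cc]{\infty}$-sections and hence lies in $\dom(D_{\min})$. Convergence $\chi_\epsilon u \to u$ in $\Lp{2}$ is routine, but controlling the commutator term $\sym_D(d\chi_\epsilon)u \to 0$ in $\Lp{2}$ is the delicate point. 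This requires a Hardy-type estimate converting the vanishing boundary trace into integrability of $t^{-1}u$ in the collar. I would derive this by writing $D$ near $\dM$ in the normal form $\sym_D(\tau)(\partial_t + A_t)$ with $A_t$ a smooth family of tangential operators, and integrating against a suitable $t$-dependent weight to bring the $t^{-1}$-factor under control.
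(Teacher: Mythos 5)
The paper does not present its own proof of this theorem; it cites \cite{BBan}*{Thm.~2.3} where the argument relies on a finer trace map landing in the hybrid space $\checkH(A):=\chi^-(A)\SobH{\frac12}\oplus\chi^+(A)\SobH{-\frac12}$, built from the adapted operator $A$ and its spectral projectors $\chi^\pm(A)$. Your density sketch and the Green's-formula/extension argument for the naive $\SobH{-\frac12}$-bound are both workable and roughly in the spirit of the reference (though for density one extends $u$ to a slightly larger collar so one can mollify near $\dM$ — the shifted section should not ``vanish near $\dM$'').

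The genuine gap is in the kernel characterisation. The ``Hardy-type estimate'' you invoke would require $\partial_t u\in\Lp{2}$ near the boundary: Hardy's inequality converts $u\rest\dM=0$ into $t^{-1}u\in\Lp{2}$ only when $u$ is $\SobH{1}$ up to the boundary in the normal direction. But $u\in\dom(D_{\max})$ gives only $u,\ \sym_0(\partial_t+A_t)u\in\Lp{2}$; since $A_t u$ is a first-order tangential derivative that is not in $\Lp{2}$, neither is $\partial_t u$, and the estimate fails. Concretely, splitting $u=\chi^+(A)u+\chi^-(A)u$ in the cylinder model, the $\chi^+(A)$-piece satisfies a Hardy bound because it has the forward Duhamel representation with a contractive semigroup, but the $\chi^-(A)$-piece is governed by the backward semigroup and admits no such pointwise-in-$t$ bound; one can only get $\epsilon^{-1}\norm{u_-}_{\Lp{2}(\epsilon<t<2\epsilon)}\lesssim\norm{f}_{\Lp{2}}$, which is bounded but does not tend to zero. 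You could rescue Hardy by first invoking the boundary regularity statement (Theorem~\ref{Thm:Reg}: $u\rest\dM\in\SobH{\frac12}$ and $Du\in\Lp{2}$ force $u\in\SobH[loc]{1}$), but that theorem is logically downstream of the trace theorem in \cite{BBan}, so this would be circular.

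The route actually taken in \cite{BBan} (and in \cite{BB12}) avoids cutoffs altogether: one shows the trace map is bounded and \emph{surjective} onto $\checkH(A)$ (via the cylinder model and the \Hinfty-functional calculus of $A$), establishes the extended Green's identity on $\dom(D_{\max})\times\dom(D^\dagger_{\max})$ with pairing valued in $\checkH(A)\times\checkH(\widetilde A)$, proves that pairing is perfect, and then reads off the kernel from the duality $D_{\min}=(D^\dagger_{\max})^\ad$. None of the spectral splitting, the $\checkH$-space, or this duality appears in your proposal; replacing them with a classical Hardy inequality is where the argument breaks.
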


For a closed subspace $B\subset \SobH{\frac12}(\dM; E)$ we define 
$$
\dom(D_B) := \set{u \in \dom(D_{\max}) : u \rest \dM \in B} .
$$
The restriction of $D_{\max}$ to $\dom(D_B)$ is denoted by $D_B$.

\section{Verifying the completeness assumption \ref{Hyp.Complete}}
\label{Sec.Complete}

In this section, we provide a useful geometric criterion for completeness of an operator and its formal adjoint.
Conceptually, any geometric operator on a complete Riemannian manifold satisfies completeness. 

\begin{theorem}[\cite{BBan2}*{Thm.~2.1}]\label{Thm:cherwolf}
Assume \ref{Hyp.StdFirst}--\ref{Hyp.Bundles} and let $D\colon \Ck{\infty}(M;E)\to\Ck{\infty}(M;F)$ be a first-order differential operator.
Suppose $C < \infty$ is a constant and $g$ a complete Riemannian metric on $M$ such that the principal symbol of $D$ satisfies
\begin{equation}
  |\sym_D(\xi)| \le C\cdot|\xi|_g
\label{eq:symest}
\end{equation}
for all $\xi\in T^*M$.
Then \ref{Hyp.Complete} holds, i.e., $D$ and $D^\dagger$ are complete.
\end{theorem}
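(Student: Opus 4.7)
The strategy is to approximate any $u\in \dom(D_{\max})$ in the graph norm by products $\chi_n u$ with compactly supported smooth cutoffs $\chi_n\colon M\to[0,1]$. Since $\chi_n$ is smooth, the Leibniz rule yields
$$
D(\chi_n u) = \chi_n Du + \sym_D(d\chi_n) u
$$
in the distributional sense. Provided $\chi_n\to 1$ pointwise and $\sup_M|d\chi_n|_g \to 0$, dominated convergence gives $\chi_n u \to u$ in $\Lp{2}(M;E)$ and $\chi_n Du \to Du$ in $\Lp{2}(M;F)$, while the symbol bound \eqref{eq:symest} controls the remaining term by
$$
\norm{\sym_D(d\chi_n) u}_{\Lp{2}}^2 \le C^2 \int_M |d\chi_n|_g^2 \, |u|^2\, d\mu \le C^2\, \norm{\, |d\chi_n|_g\,}_\infty^2\, \norm{u}_{\Lp{2}}^2 \longrightarrow 0.
$$
As each $\chi_n u$ is compactly supported and lies in $\dom(D_{\max})$, this gives completeness of $D$. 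Thus the whole argument reduces to constructing cutoffs with the three stated properties.

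The main obstacle is the existence of such $\chi_n$, which is precisely where the completeness of $g$ is used. Fix a compact set $K_0\subset M$ containing a neighbourhood of $\dM$, possible because $\dM$ is compact. Completeness of $g$, via the Hopf--Rinow theorem (applied after extending $g$ smoothly across the boundary, e.g.\ to the double of $M$), implies that the distance function $r(x) := d_g(x,K_0)$ is proper on $M$. Although $r$ is only $1$-Lipschitz, a standard regularisation on a complete Riemannian manifold (via a locally finite cover by relatively compact normal charts and mollification through a subordinate partition of unity) produces a smooth proper exhaustion $\rho\colon M\to[0,\infty)$ with $|d\rho|_g\le 2$ and $\rho\ge r-1$. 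Choose $\phi\in \Ck[c]{\infty}(\R)$ with $\phi\equiv 1$ on $[-1,1]$, $\spt\phi\subset(-2,2)$, $0\le\phi\le 1$, and $|\phi'|\le 2$, and set $\chi_n := \phi(\rho/n)$. Then $\chi_n\in \Ck[c]{\infty}(M)$ by properness of $\rho$, $\chi_n\equiv 1$ on $\{\rho\le n\}$ so that $\chi_n\to 1$ pointwise, and $|d\chi_n|_g\le 2/n$, exactly as required.

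Finally, the statement for $D^\dagger$ follows from the same cutoffs: its principal symbol satisfies $\sym_{D^\dagger}(\xi) = -\sym_D(\xi)^*$, so $|\sym_{D^\dagger}(\xi)| = |\sym_D(\xi)| \le C|\xi|_g$, and replacing $D$ by $D^\dagger$ and $E$ by $F$ throughout the preceding argument yields completeness of $D^\dagger$.
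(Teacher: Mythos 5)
Your argument is correct and is essentially the standard Gaffney--Chernoff cutoff argument that the referenced \cite{BBan2}*{Thm.~2.1} uses: approximate $u\in\dom(D_{\max})$ by $\chi_n u$, control $D(\chi_n u)-\chi_n Du=\sym_D(d\chi_n)u$ via the symbol bound, and use completeness of $g$ to build proper smooth cutoffs with uniformly small gradient. (Minor slip: with $|\phi'|\le 2$ and $|d\rho|_g\le 2$ you get $|d\chi_n|_g\le 4/n$, not $2/n$; this is immaterial since it still tends to $0$.)
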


\begin{remark}
Note that there is no assumption here that $\mu$ is induced by the Riemannian metric~$g$.
Although $D^\dagger$ depends on $\mu$, the principal symbol does not.
\end{remark}

\begin{example} 
\label{Ex:DiracComp}
For any Dirac-type operator $D$ we have
$$
\modulus{\sym_{D}(\xi)u}^2 = h\big(\sym_D(\xi)^*\sym_D(\xi)u,u\big) = \modulus{\xi}_g^2\, \modulus{u}^2
$$
and hence
$$
\modulus{\sym_{D}(\xi)} \le |\xi|_g.
$$
\end{example}

\begin{example}
\label{Ex:RSComp}
Let $D_{\nicefrac32}$ be a Rarita-Schwinger operator.
From the computation of the principal symbol of $D_{\nicefrac32}$ in \eqref{eq.RS.symbol}, we see that the estimate~\eqref{eq:symest} holds with $C=1$.
\end{example}

\begin{remark}
A slightly more general version of Theorem~\ref{Thm:cherwolf} can be obtained by replacing the constant $C$ in this theorem by the quantity $C( \mathrm{dist}(p,x))$ where $p \in M$ is a fixed point and  $C\colon [0,\infty)\to\R$ is a positive monotonically increasing continuous function satisfying: 
\begin{equation*}
  \int_0^\infty \frac{dr}{C(r)} = \infty.
\end{equation*}
\end{remark}

\section{Elliptic boundary conditions}
\label{Sec.EBC}

Regularity is a local question and interior regularity is furnished simply from the ellipticity of the operator $D$. 
Given that we have defined the boundary restriction map on $\dom(D_{\max})$, we are able to consider the question of regularity up to the boundary. 

\begin{theorem}[\cite{BBan}*{Thm.~2.4}]
\label{Thm:Reg}
Under~\ref{Hyp.StdFirst}--\ref{Hyp.StdLast}, we have that:
\begin{align}
\label{Eq:MaxDom}
&\dom(D_{\max}) \intersect \SobH[loc]{k+1}(M;E) \notag\\
&\qquad=
\set{ u \in \dom(D_{\max}): Du \in \SobH[loc]{k}(M;E)\text{ and } u\rest{\dM} \in \SobH{k+\frac{1}{2}}(\dM;E)}.
\end{align}
\end{theorem}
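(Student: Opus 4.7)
The inclusion $\subseteq$ is the soft direction. If $u \in \SobH[loc]{k+1}(M;E)$, then $Du \in \SobH[loc]{k}(M;F)$ because $D$ is a first-order differential operator with smooth coefficients; and the classical Sobolev trace theorem (applied in collar charts near the compact boundary $\dM$) gives $u|_{\dM} \in \SobH{k+\frac12}(\dM;E)$. The boundary trace defined here agrees with the classical one on smooth sections, so the two notions match by density.

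The substantive content is the inclusion $\supseteq$. My plan is to separate interior and boundary contributions via a partition of unity subordinate to an open cover $\{U_0, U_1, \dots, U_N\}$, where $U_0 \Subset M \setminus \dM$ and each $U_j$ for $j \geq 1$ is a collar neighbourhood of a piece of $\dM$. On $U_0$, the statement reduces to classical interior elliptic regularity for the first-order elliptic operator $D$: the equation $Du = f$ with $f \in \SobH[loc]{k}$ and $u \in \Lp{2}_{loc}$ gives $u \in \SobH[loc]{k+1}$ by a standard iteration of the a~priori estimate $\|u\|_{\SobH{m+1}} \lesssim \|Du\|_{\SobH{m}} + \|u\|_{\Lp{2}}$ on relatively compact subsets.

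Near the boundary, in a collar $U_j \cong [0,\epsilon) \times V_j$ with $V_j \subset \dM$, one uses the normal form
\[
D = \sym_D(\tau)\bigl(\partial_t + A_t\bigr) + R_t,
\]
where $A_t$ is a smooth $t$-family of first-order elliptic differential operators on $V_j$ (the adapted boundary operator), $R_t$ is of order zero, and $\sym_D(\tau)$ is pointwise invertible because $D$ is elliptic and $\tau \neq 0$ on $T^*M|_{\dM}$. The problem then reduces to an evolution equation $\partial_t u + A_t u = \tilde f$ on $[0,\epsilon)\times V_j$ with data $u(0,\cdot) \in \SobH{k+\frac12}(V_j;E)$ and right-hand side $\tilde f \in \SobH[loc]{k}$. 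Propagating regularity inward from $t=0$ gives the required $\SobH{k+1}$-regularity in the collar, after which the pieces are reassembled via the partition of unity.

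The main obstacle lies in executing that propagation for a non-self-adjoint family $A_t$: one must split the boundary data along the spectral decomposition of $A_0$ into parts that decay inward under $\e^{-tA_0}$ and parts corresponding to a backward-in-time well-posed evolution. In the self-adjoint case, this is handled by spectral theory, but in our generality the correct tool is the bisectorial $\mathrm{H}^\infty$-functional calculus together with real-variable harmonic analysis developed in \cite{BBan}. Granting that machinery, the regularity estimate
\[
\|u\|_{\SobH{k+1}(U_j)} \lesssim \|Du\|_{\SobH{k}(U_j)} + \|u|_{\dM}\|_{\SobH{k+\frac12}(V_j)} + \|u\|_{\Lp{2}(U_j)}
\]
is obtained for $u \in \dom(D_{\max})$ (first under mollification/finite-difference approximation to justify differentiating the evolution equation, then by passing to the limit), and the theorem follows by induction on $k$.
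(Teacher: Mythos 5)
The paper does not actually prove Theorem~\ref{Thm:Reg}; it imports it verbatim from \cite{BBan}*{Thm.~2.4}, so there is no in-text proof to compare against. Judged against the approach in \cite{BBan} as the paper describes it, your scaffolding is sound: the soft inclusion by trace and mapping properties of a first-order operator, the partition-of-unity reduction to interior elliptic regularity plus a collar problem, the normal form $D = \sym_D(\tau)(\partial_t + A_t) + R_t$ (which is correct --- by Definition~\ref{Def.AdOp}, $A$ has symbol $\sym_D(\tau)^{-1}\sym_D(\xi)$, so $\sym_D(\tau)\sym_A(\xi)=\sym_D(\xi)$), and the split along $\chi^\pm(A)$ with bisectorial $\mathrm{H}^\infty$-calculus to handle non-self-adjointness. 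These are indeed the right ingredients.

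That said, what you have written is a plan and not a proof. The one non-trivial estimate,
\[
\|u\|_{\SobH{k+1}(U_j)} \lesssim \|Du\|_{\SobH{k}(U_j)} + \|u|_{\dM}\|_{\SobH{k+\frac12}(V_j)} + \|u\|_{\Lp{2}(U_j)},
\]
is precisely the content of the theorem, and you obtain it by ``granting that machinery'' --- i.e., by citing \cite{BBan}. The step you elide is exactly the hard one: only the $\chi^+(A)$-part of $u|_{\dM}$ admits the naive ``propagate inward by $e^{-tA}$'' picture; the $\chi^-(A)$-part cannot be evolved forward and must instead be controlled from the interior data via a Hardy/Calder\'on projection argument, whose quantitative version for a merely bisectorial, non-self-adjoint $A$ rests on the square-function and quadratic estimates of \cite{BBan} and not just on mollification or difference quotients. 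This is also why the sharper statement, Theorem~\ref{Thm:Trace}~\ref{Thm:Trace4}, requires only $\chi^+(A)(u|_{\dM})\in\SobH{k+\frac12}(\dM;E)$: the $\chi^-$-half of the trace is automatically as regular as $u$ and $Du$ allow, and the proposal never establishes that fact. You correctly flag the obstruction, but the argument as written does not resolve it, so as a self-contained proof there is a genuine gap at that step. (Two minor points: with $M$ possibly noncompact one needs a locally finite cover rather than a finite $\{U_0,\dots,U_N\}$ with $U_0\Subset M\setminus\dM$; and the paper's displayed equation has a typo, $Du\in\SobH[loc]{k}(M;E)$ should read $\SobH[loc]{k}(M;F)$, which you correctly wrote.)
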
 

By the Sobolev embedding theorem, we therefore have:
\begin{align*}
\dom(D_{\max}) &\intersect \Ck{\infty}(M;E) \\
&=
\set{ u \in \dom(D_{\max}): Du \in \Ck{\infty}(M;E)\text{ and } u\rest{\dM} \in \Ck{\infty}(\dM;E)}.
\end{align*}

In the next subsection, we will see that the condition $u\rest{\dM} \in \SobH{k+\frac{1}{2}}(\dM;E)$ in \eqref{Eq:MaxDom} can be relaxed, see Theorem~\ref{Thm:Trace}~\ref{Thm:Trace4}.

\subsection{Adapted boundary operators}
To describe elliptic boundary conditions, we require the notion of adapted boundary operators.

\begin{definition}[Adapted boundary operator]
\label{Def.AdOp} 
Assume \ref{Hyp.StdFirst} and \ref{Hyp.Inward}--\ref{Hyp.D}.
We say that a differential operator $A\colon\Ck{\infty}(\dM;E) \to \Ck{\infty}(\dM;E)$ is an \emph{adapted operator} for $D$ if the principal symbol of $A$ satisfies: 
 \begin{equation}
\label{Eq:AdSym}
\sym_{A}(\xi) = \sym_{D}(\tau)^{-1} \circ \sym_{D}(\xi).
\end{equation} 
Here we identify $\xi\in T^*_x\dM$ with its extension to $T_x M$ which satisfies $\xi(T) = 0$.
\end{definition}

\begin{remark}
\label{Rem.AdOp}
The construction of $\sym_A$ and the notion of adapted  operator are still meaningful if we restrict to a two-sided hypersurface $N\subset M$ instead of $\dM$.
\end{remark}

Clearly $A$ is a first-order elliptic differential operator. 
Such an operator always exists. 
Its spectrum is discrete.
The projectors $\chi^{\pm}(A)$ projecting onto the eigenspaces for the eigenvalues with positive or non-positive real part, respectively, exist and act boundedly $\chi^{\pm}(A)\colon \SobH{\alpha}(\dM;E) \to \SobH{\alpha}(\dM;E)$ for all $\alpha \in \R$.

\begin{theorem}[\cite{BBan}*{Thm.~2.3~(iv) and Thm.~2.4}]
\label{Thm:Trace} 
Assume \ref{Hyp.StdFirst}--\ref{Hyp.StdLast}. 
Let  $A$ be an adapted boundary operator for $D$.
Then:  
\begin{enumerate}[label=(\roman*), labelwidth=!, labelindent=2pt, leftmargin=21pt]
\item 
\label{Thm:Trace3}
 For all $u \in \dom(D_{\max})\cap \SobH[loc]{1}(M;E)$ and $v \in \dom((D^\dagger)_{\max})\cap \SobH[loc]{1}(M;F)$,
\begin{equation*}
\label{Eq:MaxDInt}
\inprod{D_{\max} u, v}_{\Lp{2}(M;F)} - \inprod{u, (D^\dagger)_{\max}v}_{\Lp{2}(M;E)} = -\inprod{ u\rest{\dM}, \sym_0^\ast v\rest{\dM}}_{\Lp{2}(\dM;E)}.
\end{equation*}
\item 
\label{Thm:Trace4} 
The boundary regularity can be described in terms of $\chi^+(A)$ by: 
\begin{multline*}
\dom(D_{\max}) \cap \SobH[loc]{k+1}(M;E) \\
	=  \set{u \in \dom(D_{\max}): Du \in \SobH[loc]{k}(M;F)\ \text{and}\ \chi^{+}(A)(u\rest{\dM}) \in \SobH{k + \frac{1}{2}}(\dM;E)}.
\end{multline*}
\end{enumerate}
\end{theorem}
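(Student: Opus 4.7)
The plan is to treat (i) and (ii) separately, using different toolkits.

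For (i), I would start from Green's identity: for $u, v \in \Ck[c]{\infty}$, a pointwise computation using the definition of $D^\dagger$ and integration by parts yields
\begin{equation*}
\inprod{Du, v}_{\Lp{2}(M;F)} - \inprod{u, D^\dagger v}_{\Lp{2}(M;E)} = -\int_{\dM} h^F(\sym_D(\tau) u\rest{\dM}, v\rest{\dM})\, d\nu = -\inprod{u\rest{\dM}, \sym_0^\ast v\rest{\dM}}_{\Lp{2}(\dM;E)},
\end{equation*}
the sign reflecting that $T$ is interior pointing. To extend this to $u \in \dom(D_{\max}) \cap \SobH[loc]{1}(M;E)$ and $v \in \dom((D^\dagger)_{\max}) \cap \SobH[loc]{1}(M;F)$, I would construct approximations in $\Ck[c]{\infty}$ converging \emph{both} in the graph norm (possible by Theorem~\ref{Thm:CTrace} combined with \ref{Hyp.Complete} for truncation to compact support) \emph{and} locally in $\SobH{1}$. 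The latter can be arranged by interior cutoffs together with tangential mollification in a collar of $\dM$, after which continuity of the boundary trace $\SobH{1} \to \SobH{\frac12} \embed \Lp{2}$ passes the boundary pairing to the limit.

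For (ii), the inclusion $\subseteq$ follows immediately from Theorem~\ref{Thm:Reg}: if $u \in \dom(D_{\max}) \cap \SobH[loc]{k+1}(M;E)$ then $u\rest{\dM} \in \SobH{k+\frac12}(\dM;E)$, and the boundedness of $\chi^+(A)$ on Sobolev spaces yields $\chi^+(A)(u\rest{\dM}) \in \SobH{k+\frac12}(\dM;E)$. The substantive content is the reverse inclusion. Working in a collar $\dM \times [0, \varepsilon)$, the operator takes the normal form
\begin{equation*}
\sym_D(\tau)^{-1} D = \partial_t + A_t + R_t
\end{equation*}
with $A_0 = A$ an adapted operator and $R_t$ of order zero in the tangential directions. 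Given $Du \in \SobH[loc]{k}$, the equation reads $\partial_t u + A_t u = \sym_D(\tau)^{-1}(Du) - R_t u$ in the collar, and I would decompose $u$ using the $t$-dependent projectors $\chi^\pm(A_t)$. The component $\chi^+(A_t)u$, which propagates inward along the decaying spectral subspace, is controlled by a Poisson-type representation in terms of its prescribed boundary datum $\chi^+(A)(u\rest{\dM})$, while $\chi^-(A_t)u$ inherits higher regularity directly from the equation since the inward evolution on the non-positive spectral subspace is well-posed. Bootstrapping in $k$ via Theorem~\ref{Thm:Reg} completes the argument.

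The main obstacle lies in (ii), specifically the spectral analysis of $A$ under the standing hypotheses, where $A$ need not be self-adjoint. Without self-adjointness, $\chi^\pm(A)$ cannot be defined via an orthogonal eigendecomposition, and classical Fourier methods do not apply. One must instead invoke bisectoriality of $A$ together with its bounded $\mathrm{H}^\infty$-functional calculus in order to guarantee that $\chi^\pm(A)$ exist and act boundedly on $\SobH{\alpha}(\dM;E)$ for all $\alpha \in \R$, and one needs maximal regularity estimates for the associated semigroups to justify the Poisson-type representation. I would invoke these analytical inputs from \cite{BBan} as black boxes, noting that in the classical self-adjoint adapted setting the whole argument reduces to direct Fourier expansion along the eigenbasis of $A$.
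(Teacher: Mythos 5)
The paper does not actually prove Theorem~\ref{Thm:Trace}: it is cited wholesale from \cite{BBan}*{Thm.~2.3~(iv) and Thm.~2.4}, so there is no in-paper argument against which to compare yours. That said, your sketch is a recognisable outline of the route taken in \cite{BBan}. Part~(i): fine. Starting from Green's identity on $\Ck[c]{\infty}$ and passing to the limit via approximating sequences that converge simultaneously in graph norm and in $\SobH[loc]{1}$ is exactly the right move, and your observation that for compactly supported $\SobH{1}$ sections the $\SobH{1}$ convergence already implies graph-norm convergence closes the loop cleanly. Part~(ii): the inclusion ``$\subseteq$'' via Theorem~\ref{Thm:Reg} and Sobolev boundedness of $\chi^+(A)$ is correct, and the reduction to the collar normal form $\sym_D(\tau)^{-1}D=\partial_t+A_t+R_t$ followed by a spectral splitting is indeed the heart of the matter.

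However, your justification for why $\chi^-(A_t)u$ regularises has the dynamics backwards. You write that $\chi^-(A_t)u$ ``inherits higher regularity directly from the equation since the inward evolution on the non-positive spectral subspace is well-posed.'' On the $\chi^-$ subspace the \emph{forward} Cauchy problem (increasing $t$, i.e.\ inward) is \emph{ill}-posed: $e^{-tA}$ grows on eigenvalues with $\Re\lambda\le0$. That ill-posedness is precisely what makes the mechanism work: because growing modes are excluded by the global $\Lp{2}$ constraint, $\chi^-(A)u$ is not freely prescribed at $t=0$ but is instead determined by the interior forcing through the retrograde Duhamel formula $\chi^-(A)u(t)=-\int_t^\infty e^{-(t-s)A}\chi^-(A)f(s)\,ds$, and it is the smoothing of this operator that yields the half-derivative gain $\chi^-(A)(u\rest\dM)\in \SobH{k+\frac12}(\dM;E)$. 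Your phrasing suggests a well-posed forward evolution is being solved, which would be the wrong picture and would not explain why $\chi^-(A)(u\rest\dM)$ is \emph{automatically} regular without being assumed so. Related to this, the bootstrap is a bit more delicate than ``bootstrapping in $k$ via Theorem~\ref{Thm:Reg}'' suggests, since the forcing in the normal form is $\sym_0^{-1}Du - R_t u$ and contains $u$ itself; one must climb one Sobolev level at a time, using at each step the retrograde gain on $\chi^-$ plus the hypothesis on $\chi^+$ to feed $u\rest\dM\in\SobH{j+\frac12}$ into Theorem~\ref{Thm:Reg}. The remaining black boxes (bisectoriality, bounded $\mathrm{H}^\infty$ calculus, maximal regularity for the non-self-adjoint $A$) are correctly identified as the genuine technical content of \cite{BBan} and are fair to defer.
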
 

\begin{example}\label{ex.DiracAdOp}
For a Dirac-type operator $D$ on a Riemannian manifold we choose $T$ to be the inward pointing unit normal vector field and its induced conormal field $\tau$ along $\dM$.
Then $\sym_D(\tau)^*\sym_D(\tau) = |\tau|^2=1$ and hence $\sym_D(\tau)^* = \sym_D(\tau)^{-1}$.
Therefore,  
\begin{align*}
\sym_A(\xi)^*
&=
\sym_D(\xi)^*\circ(\sym_D(\tau)^{-1})^*
=
\sym_D(\xi)^*\circ\sym_D(\tau) \\
&=
-\sym_D(\tau)^*\circ\sym_D(\xi) 
=
-\sym_D(\tau)^{-1}\circ\sym_D(\xi) 
=
-\sym_A(\xi).
\end{align*}
Moreover, $\sym_A$ also satisfies the Clifford relations.
Thus, $A$ is also of Dirac-type and can be chosen to be self-adjoint.
\end{example}

\begin{example}\label{ex.RSAdOp}
The symbol $\sym_A$ defined in \eqref{Eq:AdSym} for the Rarita-Schwinger operator $D=D_{\nicefrac{3}{2}}$ is not skew-symmetric, see \cite{BBan}*{Sec.~3.3}.
Thus the adapted operator $A$ cannot be chosen self-adjoint and is not again a Rarita-Schwinger operator.
\end{example}

\subsection{General theory of elliptic boundary conditions}
We identify a class of ``good'' boundary conditions for which we will obtain regularity up to the boundary.
We start by giving an abstract definition of elliptic boundary conditions.

\begin{definition}[$\infty$-Elliptic boundary condition]
\label{Def.ER}
A closed subspace $B\subset \SobH{\frac12}(\dM;E)$ is called an \emph{$\infty$-elliptic boundary condition} for $D$ if 
$$
B = W_+ \oplus \set{v + gv: v \in V_{-}\cap \SobH{\frac12}(\dM;E)}
$$
where 
\begin{enumerate}[label=(\roman*), labelwidth=!, labelindent=2pt, leftmargin=21pt]
\item  \label{Def.EllBC:First} \label{Def.EllBC:MutualComp}
$W_{\pm}$, $V_{\pm}$ are mutually complementary subspaces of $\Lp{2}(\dM;E)$ such that 
	$$V_\pm \oplus W_\pm = \chi^{\pm}(A) \Lp{2}(\dM;E),$$
\item  \label{Def.EllBC:FiniteDim}
$W_{\pm}$ are finite dimensional with $W_{\pm}, \widetilde{W}_{\pm} := (V_+\oplus V_- \oplus W_\mp)^{\perp,\Lp{2}} \subset \Ck{\infty}(\dM;E)$, and 
\item  \label{Def.EllBC:Last} \label{Def.EllBC:Bddmap}
$g\colon \Lp{2}(\dM;E) \to \Lp{2}(\dM;E)$ is bounded linear with  
\begin{align*} 
&g|_{V_+\oplus W_+\oplus W_-} = 0,\\
&g(V_-)\subset V_+,\\
&g(V_-\cap \SobH{s}(\dM;E)) \subset V_+\cap \SobH{s}(\dM;E)\quad \text{and} \\
&g^\ast(\widetilde{V}_{+} \cap \SobH{s}(\dM;E)) \subset \widetilde{V}_- \cap \SobH{s}(\dM;E) .
\end{align*}
for all $s\ge\frac12$, where $g^*\colon \Lp{2}(\dM;E) \to \Lp{2}(\dM;E)$ is the adjoint map of $g$ and $\widetilde{V}_{\pm} = (V_{\mp} \oplus W_+\oplus W_-)^{\perp,\Lp{2}}$.
\end{enumerate}
\end{definition}

From $g|_{V_+\oplus W_+\oplus W_-} = 0$ it follows that $g^*(\widetilde{V}_+)\subset \widetilde{V}_-$.
If the decomposition  $\Lp{2}(\dM;E) = V_- \oplus W_- \oplus V_+ \oplus W_+$ is orthogonal, then $\widetilde{W}_{\pm}=W_\pm$ and $\widetilde{V}_{\pm}=V_\pm$.

\begin{remark}
If $B\subset \SobH{\frac12}(\dM;E)$ is an $\infty$-elliptic boundary condition for $D$, then $D_B\colon \dom(D_B)\to \Lp{2}(M;F)$ is a closed operator and $\dom\big(\big(D_{B}\big)^*\big)\subset \SobH[loc]{1}(M;F)$.
\end{remark}

\begin{remark}
In \cite{BBan} a weaker notion of elliptic boundary condition was introduced.
Our notion of $\infty$-elliptic boundary condition is equivalent to that of ``$\infty$-regular elliptic boundary condition'' in \cite{BBan}*{Definition~2.11}.
\end{remark}

\begin{remark}
Ellipticity of a boundary condition $B\subset\SobH{\frac12}(\dM;E)$ depends on $D$ but is independent of the choice of adapted boundary operator $A$ as one can see from Corollary~\ref{Cor.InfReg}.
\end{remark}

\begin{example}
\label{Ex.APS}
If we put $W_+=W_-=0$ and $g=0$, then $B=V_-=\chi^-(A)\SobH{\frac12}(\dM;E)$ is an $\infty$-elliptic boundary condition.
Then $B=:B_{\APS}(A)$ is called the \emph{Atiyah-Patodi-Singer boundary condition}.
\end{example}

Elliptic boundary conditions enjoy the best possible regularity properties as outlined in the following theorem.

\begin{theorem}[\cite{BBan}*{Thm.~2.12}]
\label{thm.BoundaryReg}
Assume \ref{Hyp.StdFirst}--\ref{Hyp.StdLast}. 
Let  $A$ be an adapted boundary operator for $D$ and let $B\subset\SobH{\frac12}(\dM;E)$ be an $\infty$-elliptic boundary condition.
Then for all $k\in\N_0$: 
\begin{align*}
\dom(D_B) &\cap \SobH[loc]{k+1}(M;E) \\
&=  
\big\{u \in \dom(D_B): Du \in \SobH[loc]{k}(M;F)\ \text{and}\ u\rest{\dM} \in \SobH{k + \frac{1}{2}}(\dM;E)\big\}.
\end{align*}
In particular,
\begin{align*}
\dom(D_B) &\intersect \Ck{\infty}(M;E)  \\
&=
\big\{ u \in \dom(D_B): Du \in \Ck{\infty}(M;E)\text{ and } u\rest{\dM} \in \Ck{\infty}(\dM;E)\big\}.
\end{align*}
\end{theorem}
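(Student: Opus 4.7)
The plan is to obtain Theorem~\ref{thm.BoundaryReg} as an immediate corollary of the unconstrained regularity statement Theorem~\ref{Thm:Reg}. Recalling the definition $\dom(D_B) = \{u \in \dom(D_{\max}) : u\rest{\dM} \in B\}$, both sides of the claimed identity are obtained from the corresponding sets for $\dom(D_{\max})$ by intersection with the condition $u\rest{\dM} \in B$, and so the desired equality should pass through from Theorem~\ref{Thm:Reg} unchanged.

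Spelling this out: for the inclusion $\subseteq$, suppose $u \in \dom(D_B) \cap \SobH[loc]{k+1}(M;E)$. Then a fortiori $u \in \dom(D_{\max}) \cap \SobH[loc]{k+1}(M;E)$, so Theorem~\ref{Thm:Reg} delivers $Du \in \SobH[loc]{k}(M;F)$ and $u\rest{\dM} \in \SobH{k+\frac12}(\dM;E)$, while the condition $u\rest{\dM} \in B$ is retained by hypothesis. For $\supseteq$, suppose $u \in \dom(D_B)$ satisfies $Du \in \SobH[loc]{k}$ and $u\rest{\dM} \in \SobH{k+\frac12}$. Then $u \in \dom(D_{\max})$ satisfies the hypotheses on the right-hand side of Theorem~\ref{Thm:Reg}, whence $u \in \SobH[loc]{k+1}(M;E)$, and membership in $\dom(D_B)$ is unchanged.

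The smoothness assertion (``in particular'') follows by applying the preceding characterization for every $k \in \N_0$ and invoking Sobolev embedding locally on $M$ and on the compact manifold $\dM$, giving $\bigcap_{k} \SobH[loc]{k+1}(M;E) = \Ck{\infty}(M;E)$ and $\bigcap_{k} \SobH{k+\frac12}(\dM;E) = \Ck{\infty}(\dM;E)$.

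I anticipate no real technical obstacle in this argument, since the $\infty$-ellipticity of $B$ is not invoked explicitly. Its role is instead implicit and conceptual: the graph decomposition $B = W_+ \oplus \{v + gv : v \in V_- \cap \SobH{\frac12}(\dM;E)\}$ of Definition~\ref{Def.ER}, together with the preservation of $\SobH{s}$-regularity by $g$ for all $s \geq \tfrac12$, ensures that the subspaces $B \cap \SobH{k+\frac12}(\dM;E)$ are substantive for every $k$, so that the right-hand side of the theorem genuinely detects higher regularity rather than reducing to a vacuous statement.
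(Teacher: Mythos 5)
Your reduction is logically correct for the theorem as stated. With the definition $\dom(D_B) := \{u \in \dom(D_{\max}) : u\rest{\dM} \in B\}$ for a closed subspace $B \subset \SobH{\frac12}(\dM;E)$, the claimed equality follows by intersecting both sides of the identity in Theorem~\ref{Thm:Reg} with the trace condition $u\rest{\dM} \in B$, and the $\Ck{\infty}$-version follows by intersecting over all $k \in \N_0$ and using Sobolev embedding locally on $M$ and on the compact manifold $\dM$. Since this guide does not give a proof of its own (the result is quoted from \cite{BBan}*{Thm.~2.12}), there is nothing to compare against line by line.

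Nevertheless, you should be uneasy about the fact that your argument proves more than it should: it applies verbatim to \emph{every} closed subspace $B \subset \SobH{\frac12}(\dM;E)$, making the $\infty$-ellipticity hypothesis inert. That is in tension with Theorem~\ref{Thm.HigherReg}, which asserts that the very regularity identities you have derived (together with the corresponding ones for $D^\dagger$ and $B^\dagger$) are \emph{equivalent} to $k$-regular ellipticity of $B$. The resolution lies in the notation: in \cite{BBan} the operator carrying the boundary condition is the closed operator $D_{B,\max}$, whose domain is cut out by the closure of $B$ in a trace space strictly weaker than $\SobH{\frac12}$ and sitting inside $\SobH{-\frac12}(\dM;E)$. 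That domain is in general larger than the $\dom(D_B)$ of this paper, and its elements need not have $\SobH{\frac12}$-traces a priori; ellipticity is precisely what rules such elements out and identifies $D_{B,\max}$ with $D_B$ (equivalently, what makes $D_B$ closed, as noted in the remark following Definition~\ref{Def.ER}). Once the domain is defined with the $\SobH{\frac12}$-trace built in — the convention adopted in this guide — the statement does collapse to your intersection argument. So your proof is correct for the statement as written, but your closing speculation about ellipticity (keeping $B \cap \SobH{k+\frac12}$ ``substantive'') is not the operative mechanism; the mechanism is that ellipticity ensures the domain one is reasoning about coincides with the full closed domain of the quoted theorem.
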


To understand the adjoint problem, the following definition will be useful.
Let $\sym_0  := \sym_D(\tau)$ be the principal symbol of $D$ in the conormal direction.
Viewing $B$ as a subspace of $\SobH{-\frac12}(\dM;E)$, we put
$$
B^\dagger := \big\{v \in \SobH{\frac12}(\dM;F) : \inprod{  u, \sym_0^\ast v}_{\SobH{-\frac12} \times \SobH{\frac12}} = 0\quad \forall u \in B\big\}.
$$

\begin{example}
\label{Ex.APSdagger}
If $A$ is self-adjoint and $\sym_0$ anti-commutes with $A$, then $B_{\APS}(A)=B_{\APS}(A)^\dagger\oplus\ker(A)$.
In particular, $B_{\APS}(A)=B_{\APS}(A)^\dagger$ if and only if $\ker(A)=0$.
\end{example}

\begin{theorem}[\cite{BBan}*{Prop.~8.2}]
Assume \ref{Hyp.StdFirst}--\ref{Hyp.StdLast}. 
Let $B$ be an $\infty$-elliptic boundary condition for $D$.
Then $B^\dagger$ is an $\infty$-elliptic boundary condition for $D^\dagger$ and the adjoint operator of $D_B$ is given by
$$
(D_B)^* = D^\dagger_{B^\dagger}.
$$
\end{theorem}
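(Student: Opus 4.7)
The statement splits into two tasks: (A) verifying that $B^\dagger$ is $\infty$-elliptic for $D^\dagger$, and (B) identifying the Hilbert-space adjoint of $D_B$. I would address (B) first, since it needs only Green's formula together with the boundary regularity already furnished by Theorem~\ref{thm.BoundaryReg}; the structural claim (A) then follows by a mostly formal construction.

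For the easy inclusion $D^\dagger_{B^\dagger} \subset (D_B)^*$: any $u \in \dom(D_B)$ has $u|_{\dM} \in B \subset \SobH{\frac12}(\dM;E)$, so $u \in \SobH[loc]{1}(M;E)$ by Theorem~\ref{Thm:Reg}, and similarly any $v \in \dom(D^\dagger_{B^\dagger})$ lies in $\SobH[loc]{1}(M;F)$. Green's formula (Theorem~\ref{Thm:Trace}\ref{Thm:Trace3}) then gives
$$
\inprod{D u, v}_{\Lp{2}(M;F)} - \inprod{u, D^\dagger v}_{\Lp{2}(M;E)} = -\inprod{u|_{\dM}, \sym_0^\ast v|_{\dM}}_{\Lp{2}(\dM;E)},
$$
and the right-hand side vanishes by the defining annihilator condition on $B^\dagger$. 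For the reverse inclusion, observe that $D_{\min}\subset D_B$ forces $(D_B)^\ast \subset (D_{\min})^\ast = D^\dagger_{\max}$. The asymmetric $\SobH{\frac12}\times\SobH{-\frac12}$ version of Green's formula — meaningful because $u\in\dom(D_B)$ still has $u|_{\dM}\in\SobH{\frac12}(\dM;E)$, while $v|_{\dM}$ a priori lies only in $\SobH{-\frac12}(\dM;F)$ — turns the identity $\inprod{Du,v}=\inprod{u,D^\dagger v}$ into $\inprod{u|_{\dM}, \sym_0^\ast v|_{\dM}}=0$ for every $u\in B$, i.e., $v|_{\dM}\in B^\dagger$.

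For task (A), I would fix an adapted boundary operator $A$ for $D$ and check that a suitable $\sym_0$-conjugate of $-A^\ast$ serves as an adapted operator $A^\dagger$ for $D^\dagger$ in the sense of Definition~\ref{Def.AdOp}; its spectral projectors then satisfy (modulo finite-dimensional subtleties) $\chi^\pm(A^\dagger) = \sym_0 \chi^\mp(A)^\ast \sym_0^{-1}$. The candidate elliptic decomposition for $B^\dagger$ is then
$$
V_\pm^\dagger := \sym_0^{-\ast}(\widetilde V_\mp),\qquad W_\pm^\dagger := \sym_0^{-\ast}(\widetilde W_\mp),\qquad g^\dagger := -\sym_0\, g^\ast\, \sym_0^{-1},
$$
and each axiom \ref{Def.EllBC:First}--\ref{Def.EllBC:Last} of Definition~\ref{Def.ER} for $B^\dagger$ reduces to its counterpart for $B$ under this conjugation. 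The graph formula $B^\dagger = W_+^\dagger \oplus \{w + g^\dagger w : w \in V_-^\dagger \cap \SobH{\frac12}(\dM;F)\}$ is read off from the annihilator definition of $B^\dagger$ combined with $g|_{V_+\oplus W_+\oplus W_-}=0$ and the Sobolev-scale mapping properties of $g^\ast$ packaged into Definition~\ref{Def.ER}\ref{Def.EllBC:Last}.

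The main obstacle is not conceptual but combinatorial: the decomposition $\Lp{2}(\dM;E)=V_-\oplus W_-\oplus V_+\oplus W_+$ is generally non-orthogonal, so the tilded complements $\widetilde V_\pm, \widetilde W_\pm$ are genuinely distinct from $V_\pm, W_\pm$, and duality via $\sym_0$ systematically exchanges each subspace with its tilded counterpart while swapping the roles of $\pm$. Tracking these exchanges consistently through every clause of Definition~\ref{Def.ER} — most delicately the Sobolev mapping condition on $g^\dagger$ at every scale $s\ge\frac12$ — is where the bookkeeping is technical, but once the symbol conjugation and $L^2$-adjoint are installed correctly, each axiom for $B^\dagger$ becomes the mirror image of the corresponding one for $B$.
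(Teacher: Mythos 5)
The proposal has a genuine gap in the reverse inclusion $(D_B)^*\subset D^\dagger_{B^\dagger}$. From Green's formula in the $\SobH{\frac12}\times\SobH{-\frac12}$ pairing you correctly extract that $\sym_0^*\,v\rest{\dM}$ annihilates $B$, where a priori $v\rest{\dM}\in\SobH{-\frac12}(\dM;F)$ only. But $B^\dagger$ is by definition a subspace of $\SobH{\frac12}(\dM;F)$, so to conclude ``$v\rest{\dM}\in B^\dagger$'' you must additionally show $v\rest{\dM}\in\SobH{\frac12}(\dM;F)$ --- equivalently $\dom\big((D_B)^*\big)\subset\SobH[loc]{1}(M;F)$. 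That is a substantive regularity statement (recorded as a separate remark in the paper right after Definition~\ref{Def.ER}), not a consequence of the annihilator identity: the $\SobH{-\frac12}$-annihilator of $B$ intersected with $\dom(D^\dagger_{\max})\rest{\dM}$ could in principle be strictly larger than $\sym_0^*B^\dagger$. You justify this step by appealing to ``boundary regularity already furnished by Theorem~\ref{thm.BoundaryReg},'' but that theorem gives regularity only for $\dom(D_B)$, not for $\dom\big((D_B)^*\big)$; the regularity you need is the one you would obtain by applying Theorem~\ref{thm.BoundaryReg} to $D^\dagger_{B^\dagger}$, which presupposes that $B^\dagger$ is $\infty$-elliptic. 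So your proposed order --- (B) before (A) --- is circular at exactly this point.

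The correct logical order is therefore: first establish the graphical decomposition of $B^\dagger$ (your task (A)), thereby showing $B^\dagger$ is $\infty$-elliptic and in particular that $B^\dagger\subset\SobH{\frac12}(\dM;F)$; only then does the regularity of $\dom\big((D_B)^*\big)$ become available, and the two inclusions of task (B) close. A second, smaller issue is your proposed adapted operator ``$-\sym_0 A^\ast\sym_0^{-1}$'' for $D^\dagger$: since $\sym_{D^\dagger}(\xi)=-\sym_D(\xi)^*$, the required principal symbol is $\sym_0^{-*}\sym_D(\xi)^*$, and a quick check shows $-\sym_0\sym_A(\xi)^*\sym_0^{-1}$ does not reproduce this unless $\sym_0$ is unitary (i.e.\ $D$ is of Dirac type). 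The conjugation that works in general involves $\sym_0^{-*}(\,\cdot\,)\sym_0^{*}$ rather than $\sym_0(\,\cdot\,)\sym_0^{-1}$; this is repairable but matters, since the whole bookkeeping of $V_\pm^\dagger, W_\pm^\dagger, g^\dagger$ hinges on getting the conjugating map right.
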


\begin{example}
If $D$ is formally self-adjoint, $D^\dagger=D$, and $A$ is as in Example~\ref{Ex.APSdagger} with $\ker(A)=0$, then $D_{B_{\APS}(A)}$ is self-adjoint.
\end{example}

When a boundary condition $B$ is $\infty$-elliptic and as described above, then the adjoint boundary is given by
$$
\sym_0^\ast B^\dagger = \widetilde{W}_- \oplus \big\{ u - g^\ast u: u \in \widetilde{V}_+ \cap \SobH{\frac12}(\dM;E)\big\} .
$$

Also, we note there are some other important characterisations of $\infty$-elliptic boundary conditions, particularly in the language of Fredholm pairs.
These are treated in depth in \cite{BBan}.

\subsection{Relation to the classical treatment of boundary conditions} 
Traditionally, boundary conditions were treated by pseudo-differential methods.
We now show how these classical considerations can be captured through our setup.

\begin{definition}[Pseudo-local and local boundary conditions]
If $P$ is a classical pseudo-differential projector of order zero, then 
$$ B := P \SobH{\frac12}(\dM;E)$$ 
is called a \emph{pseudo-local} boundary condition.

If $P$ arises out of a fibrewise smooth projection to a subbundle $E'$, then it is a \emph{local} boundary condition. 
\end{definition} 

Note that if $P$ defines a local boundary condition, i.e., it is a fibrewise smooth projection to a subbundle $E'$, then $B = \SobH{\frac12}(\dM;E')$.

It is especially useful to know when a pseudo-local boundary condition is $\infty$-elliptic as characterised in the following theorem. 
\begin{theorem}[\cite{BBan}*{Thm.~2.15}]
\label{Thm:PL}
Assume \ref{Hyp.StdFirst}--\ref{Hyp.StdLast}. 
For a pseudo-local boundary condition $B = P\, \SobH{\frac{1}{2}}(\dM;E)$, the following are equivalent: 
\begin{enumerate}[(i)]
\item 
\label{Thm:PL1}
$B$ is $\infty$-elliptic.
\item 
\label{Thm:PL2}
For some/every invertible bisectorial adapted boundary operator $A$,  
	$$P - \chi^{+}(A)\colon \Lp{2}(\dM;E) \to \Lp{2}(\dM;E)$$
	is a Fredholm operator.
\item 
\label{Thm:PL3}
For some/every invertible bisectorial adapted boundary operator $A$, 
	$$P - \chi^{+}(A)\colon \Lp{2}(\dM;E) \to \Lp{2}(\dM;E)$$
	is elliptic.
\item 
\label{Thm:PL4} 
For some/every adapted boundary operator $A$, and for every $\xi \in T_x^\ast \dM\setminus \set{0}$, $x \in \dM$, the principal symbol $\sym_{P}(x,\xi)\colon E_x \to E_x$ restricts to an isomorphism from the sum of the generalised eigenspaces of $\imath\sym_{A}(x,\xi)$ to the eigenvalues with negative real part onto the image $\sym_{P}(x,\xi)(E_x)$ and, similarly, $\sym_{P^\ast}(x,\xi)$ restricts to an isomorphism from the sum of the generalised eigenspaces of $\imath\sym_{A^*}(x,\xi)$ to the eigenvalues with negative real part onto $\sym_{P^*}(x,\xi)(E_x)$.
\end{enumerate} 
\end{theorem}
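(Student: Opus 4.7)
The plan is to establish $(ii)\Leftrightarrow(iii)\Leftrightarrow(iv)$ by classical pseudo-differential calculus on the closed manifold $\dM$, and then $(i)\Leftrightarrow(ii)$ from the graph description of $\infty$-ellipticity in Definition~\ref{Def.ER}. A preliminary observation, useful throughout, is that different choices of adapted $A$ yield $\chi^{+}(A)$'s that agree up to a lower-order (hence compact on $\Lp{2}$) perturbation, justifying the ``some/every'' formulations in (ii)--(iv).

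For $(ii)\Leftrightarrow(iii)$, under the invertible bisectorial hypothesis the spectral projector $\chi^{+}(A)$ is recovered from a Cauchy-type contour integral of the resolvent in a double sector, and this is implementable by classical zero-order pseudo-differential calculus. Hence $P - \chi^{+}(A)$ is a zero-order classical pseudo-differential operator on the closed compact manifold $\dM$, and the standard theorem (Fredholm iff elliptic for such operators) delivers the equivalence. For $(iii)\Leftrightarrow(iv)$, the principal symbol of $\chi^{+}(A)$ is the spectral projector onto the direct sum of generalised eigenspaces of $\imath\sym_{A}(x,\xi)$ for eigenvalues with positive real part (the factor $\imath$ arising from $\partial_j \leftrightarrow \imath\xi_j$ in the passage to the Fourier symbol); under the ellipticity and bisectoriality assumptions these eigenvalues avoid the imaginary axis for $\xi\ne0$, so the complementary eigenspace is precisely that associated to negative real part. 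By the elementary fact that the difference of two projections on a finite-dimensional space is invertible iff the first restricts to an isomorphism from the kernel of the second onto its range, ellipticity of $\sym_{P}(x,\xi) - \chi^{+}(\imath\sym_{A}(x,\xi))$ unpacks precisely to the statement in (iv), and the dual condition on $\sym_{P^{*}}$ and $\imath\sym_{A^{*}}$ is the analogous statement for adjoints.

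For $(i)\Leftrightarrow(ii)$, write $Q := \chi^{+}(A)$. For $(ii)\Rightarrow(i)$, set
\[
W_{-} := \ran(P)\cap\ker(Q), \qquad W_{+} := \ker(P)\cap\ran(Q),
\]
which are finite-dimensional by the Fredholm hypothesis on $P-Q$ and lie in $\Ck{\infty}(\dM;E)$ by elliptic regularity of $P$ and $Q$; the same reasoning handles $\widetilde W_{\pm}$. Choose smoothness-respecting complements $V_{\pm}\subset\chi^{\pm}(A)\Lp{2}$. The Fredholm hypothesis then forces $\ran(P)$, modulo $W_{\pm}$, to be the graph of a bounded map $g\colon V_{-}\to V_{+}$; the Sobolev-preservation requirements on $g$ and $g^{*}$ follow because $P$ and $Q$ are zero-order pseudo-differential operators acting boundedly on every $\SobH{s}(\dM;E)$. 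The converse $(i)\Rightarrow(ii)$ is more direct: the graph decomposition of $B$ combined with finite-dimensionality of $W_{\pm}$ and boundedness of $g$ builds a parametrix of $P - Q$ modulo a compact error, hence Fredholmness.

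The hardest step will be $(ii)\Rightarrow(i)$, specifically the construction of $g$ together with the verification that it preserves the Sobolev scale at every level $s\ge\frac{1}{2}$ along with the corresponding statement for $g^{*}$. This requires a smoothness-aware choice of the complements $V_{\pm}$, anchored to the smoothness of $W_{\pm}$ and $\widetilde W_{\pm}$ obtained above, followed by a bootstrap leveraging the pseudo-differential regularity of $P$ and $\chi^{\pm}(A)$ on the closed compact manifold $\dM$.
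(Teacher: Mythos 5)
There is no proof of this theorem in the paper itself: it is stated with a citation to \cite{BBan}*{Thm.~2.15}, so I can only assess your argument on its own terms.

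Your overall architecture (treating (ii)$\Leftrightarrow$(iii)$\Leftrightarrow$(iv) by zero-order pseudo-differential calculus on the closed manifold $\dM$, and (i)$\Leftrightarrow$(ii) via the graph description of Definition~\ref{Def.ER}) is plausible, and the reduction of (iii)$\Leftrightarrow$(iv) to a statement about differences of fibrewise projectors is the right idea. However, your step (ii)$\Rightarrow$(i) contains a concrete error: you set $W_{-} := \ran(P)\cap\ker(Q)$ and $W_{+} := \ker(P)\cap\ran(Q)$ and assert these are finite-dimensional by Fredholmness of $P-Q$. This is false. Fredholmness of $P-Q$ controls $\ker(P-Q)$ and $\mathrm{coker}(P-Q)$, and for projections the subspaces that sit inside $\ker(P-Q)$ are $\ran P\cap\ran Q$ and $\ker P\cap\ker Q$ (if $Px=x$ and $Qx=x$ then $(P-Q)x=0$, and likewise if both annihilate $x$), not the ones you wrote. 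The APS boundary condition already refutes your choice: take $P=\chi^{-}(A)=\mathrm{id}-Q$. Then $P-Q=\mathrm{id}-2Q$ satisfies $(\mathrm{id}-2Q)^2=\mathrm{id}$, so it is invertible and in particular Fredholm, yet your $W_{+}=\ker P\cap\ran Q=\ran Q=\chi^{+}(A)\Lp{2}(\dM;E)$ is infinite-dimensional. Moreover, even leaving finite-dimensionality aside, these are the wrong spaces for Definition~\ref{Def.ER}: $W_{+}$ there satisfies $W_+\subset B\cap\chi^{+}(A)\Lp{2}$, i.e.\ it should correspond to $\ran P\cap\ran Q$ (indeed $W_+=0$ for APS), while $W_{-}$ must be a complement in $\chi^{-}(A)\Lp{2}$ of the $\chi^{-}(A)$-image of $\overline{\ran P}$ — certainly not a subspace of $\ran P$ as your $\ran P\cap\ker Q$ would be. After correcting the choice of $W_{\pm}$, the genuinely hard part remains: one must show $\chi^{-}(A)|_{\overline{\ran P}}$ has closed range of finite codimension in $\chi^{-}(A)\Lp{2}$ and that inverting it on a complement of $W_+$ produces a bounded $g$ respecting the full $\SobH{s}$-scale for $s\geq\tfrac12$, together with the mirror statements for $g^{*}$; your sketch asserts this rather than derives it. The preliminary facts you invoke — that $\chi^{+}(A)$ is a classical zero-order $\Psi$DO for invertible bisectorial elliptic first-order $A$ via the resolvent contour integral, and that different adapted $A$ give $\chi^{+}(A)$ differing by a compact operator because the principal symbol is fixed by \eqref{Eq:AdSym} — are correct in outline.
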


The last condition, Theorem~\ref{Thm:PL}~\ref{Thm:PL4}, is named after Lopatinsky and Schapiro.
\begin{corollary}
\label{cor.local}
If $E\rest{\dM} = E' \oplus E''$ is a smooth decomposition into subbundles and $\sym_{A}(\xi)$ interchanges $E'$ and $E''$ for every $\xi \in T^\ast\dM$, then $B' := \SobH{\frac12}(\dM;E')$ and $B'' := \SobH{\frac12}(\dM;E'')$ are both $\infty$-elliptic boundary conditions for $D$. 
\end{corollary}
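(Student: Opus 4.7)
My plan is to apply the Lopatinsky-Schapiro criterion, Theorem~\ref{Thm:PL}~\ref{Thm:PL4}, to the local (hence pseudo-local) boundary condition $B' = P\,\SobH{\frac12}(\dM; E)$, where $P \in \Ck{\infty}(\dM; \End(E))$ is the fibrewise projection onto $E'$ with kernel $E''$ determined by the hypothesised decomposition. Since $P$ is an order-zero multiplication operator, $\sym_P(x,\xi) = P(x)$ for all $\xi \neq 0$, and $\sym_P(x,\xi)(E_x) = E'_x$.

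For a fixed $x \in \dM$ and $\xi \in T_x^*\dM \setminus \set{0}$, I would write $S = \imath\,\sym_A(x,\xi)$ and use the hypothesis that $S$ interchanges $E'_x$ and $E''_x$. Ellipticity of $A$ forces $S$ to be invertible, so the swaps are isomorphisms and $\dim E'_x = \dim E''_x$. The key algebraic device is the involution $R \in \End(E_x)$ defined by $R(v' + v'') = v' - v''$ for $v' \in E'_x$, $v'' \in E''_x$: a short calculation gives $RS = -SR$. Consequently $\spec(S)$ is symmetric under $\lambda \mapsto -\lambda$, and $R$ furnishes isomorphisms between the generalised eigenspaces $V_\lambda$ and $V_{-\lambda}$. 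Writing $V_{<0}$ and $V_{>0}$ for the sums of generalised eigenspaces whose eigenvalues have negative resp.\ positive real part, the absence of a zero eigenvalue gives $E_x = V_{<0} \oplus V_{>0}$ and $\dim V_{<0} = \dim V_{>0} = \tfrac12 \dim E_x = \dim E'_x$. It then suffices to verify that $P|_{V_{<0}} \colon V_{<0} \to E'_x$ is injective: if $v = v' + v'' \in V_{<0}$ with $v' = 0$, then $v = v''$, so $Rv = -v$; but $Rv \in V_{>0}$ and $-v \in V_{<0}$, forcing $v = 0$.

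For the companion condition on $P^*$, I would observe $\ran(P^*) = \nul(P)^\perp = (E''_x)^\perp$ and $\nul(P^*) = \ran(P)^\perp = (E'_x)^\perp$. For $w \in (E''_x)^\perp$ and $v' \in E'_x$, $\inprod{\sym_A^*(\xi)w, v'} = \inprod{w, \sym_A(\xi)v'} = 0$ because $\sym_A(\xi)v' \in E''_x$; the symmetric calculation shows $\sym_A^*(\xi)$ interchanges $(E'_x)^\perp$ and $(E''_x)^\perp$. The preceding argument, applied to the decomposition $E_x = (E''_x)^\perp \oplus (E'_x)^\perp$ with $\imath\,\sym_A^*(\xi)$ in place of $S$, then yields the Lopatinsky-Schapiro condition for $P^*$, establishing $B'$ as $\infty$-elliptic via Theorem~\ref{Thm:PL}. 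The claim for $B''$ follows by exchanging the roles of $E'$ and $E''$.

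I expect the main subtlety to be the bookkeeping for the adjoint, namely transferring the swap property from $\sym_A$ on $E'_x \oplus E''_x$ to $\sym_A^*$ on $(E''_x)^\perp \oplus (E'_x)^\perp$. Once the anti-commutation $RS = -SR$ is in place, everything reduces to a dimension count paired with the one-line injectivity argument above.
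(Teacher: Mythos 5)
Your overall strategy is the right one: the paper states Corollary~\ref{cor.local} as an immediate consequence of the Lopatinsky--Schapiro criterion, Theorem~\ref{Thm:PL}~\ref{Thm:PL4}, applied to the fibrewise projector $P$ onto $E'$, and your anti-commuting involution $R$ (which swaps $V_{\lambda}$ with $V_{-\lambda}$) together with the dimension count is exactly the mechanism that makes the criterion visibly hold. The passage to $P^{*}$ via the orthogonal decomposition $E_x=(E''_x)^{\perp}\oplus(E'_x)^{\perp}$ is also correct.

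There is, however, a genuine gap in the justification of $E_x=V_{<0}\oplus V_{>0}$. You argue ``Ellipticity of $A$ forces $S$ to be invertible'' and then invoke ``the absence of a zero eigenvalue.'' Invertibility of $S=\imath\sym_A(x,\xi)$ only excludes the eigenvalue $0$; it does \emph{not} exclude nonzero purely imaginary eigenvalues, which would destroy both the splitting $E_x=V_{<0}\oplus V_{>0}$ and the identity $\dim V_{<0}=\dim V_{>0}=\tfrac12\dim E_x$ on which your injectivity-to-bijectivity step relies. What is actually needed is that $\sym_A(x,\xi)$ has no \emph{real} eigenvalues, and this comes from ellipticity of $D$, not merely of $A$: for $\xi\in T^{*}_x\dM\setminus\{0\}$ and any $s\in\R$, the covector $s\tau+\xi$ is nonzero, so $\sym_D(s\tau+\xi)=\sym_D(\tau)\bigl(s\,\id+\sym_A(\xi)\bigr)$ is invertible, hence $-s\notin\spec\sym_A(\xi)$. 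Insert this and the argument closes. A minor further bookkeeping point: with the paper's convention $D(fu)=\sym_D(df)u+fDu$ one has $\sym_{A^{*}}(\xi)=-\sym_A(\xi)^{*}$, so $\imath\sym_{A^{*}}(\xi)=-\imath\sym_A(\xi)^{*}$, which differs by a sign from the operator $\imath\sym_A^{*}(\xi)$ you feed into the argument; this is harmless because the involution $R'$ exchanges $V_{<0}$ and $V_{>0}$ so the injectivity argument applies symmetrically to both, but it should be said to reassure the reader that the two agree.
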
 

\begin{example}
Let $E=F=\bigoplus_{k=0}^n\Lambda^k T^*_{\C}M$ be the complexification of the bundle of differential forms over a complete $n$-dimensional Riemannian manifold.
Let $d$ be the exterior differential and put $D:=d+d^\dagger$.
Then $D$ is of Dirac type.

As before, let $T$ be the interior unit normal vector field along the boundary $\dM$ and $\tau$ the associated unit conormal one-form.
For $0\le j\le n$ we have a canonical identification 
\[
\Lambda^jT^*_{\C}M
= \big(\Lambda^jT^*_{\C}\dM\big) \oplus \big(\tau\wedge\Lambda^{j-1}T^*_{\C}\dM\big), 
\quad
\phi = \phi^{\tan} + \tau\wedge \phi^\mathrm{nor}.
\]
The local boundary condition corresponding to the subbundle
$$
E':=\bigoplus_{k=0}^{n-1}\Lambda^k T^*_{\C}\dM \subset E_{|\dM}
$$ 
is called the {\em absolute boundary condition},
\[
B_\mathrm{abs} 
= 
\{\phi\in \SobH{\frac12}(\dM;E) : \phi^\mathrm{nor} =0 \} ,
\]
while $E'':=\tau\wedge\bigoplus_{k=0}^{n-1}\Lambda^k T^*_{\C}\dM \subset E_{|\dM}$ yields the {\em relative boundary condition},
\[
B_\mathrm{rel}
=
\{\phi\in \SobH{\frac12}(\dM;E) : \phi^{\tan} =0 \} .
\]
The normal principal symbol of $D$ is given by $\sym_D(\tau)\omega = \tau\wedge\omega + T\lrcorner\,\omega$ and interchanges the subbundles $E'$ and $E''$ while the tangential principal symbol $\sym_D(\xi)$ preserves the splitting (for $\xi\in T^*\dM$).
Therefore, the principal symbol $\sym_A(\xi)$ of the adapted boundary operator $A$ interchanges $E'$ and $E''$.
Corollary~\ref{cor.local} implies that both $B_\mathrm{abs}$ and $B_\mathrm{rel}$ are $\infty$-elliptic boundary conditions for $D$.
These boundary conditions are important in geometry because the solutions of the homogeneous boundary value problems represent elements of the absolute and relative cohomology groups of $M$, respectively.
\end{example}

\section{Fredholmness}
\label{Sec.Fred}

To study the Fredholm property of a boundary value problem, we recall the following definition. 
\begin{definition}
\label{Def.Coercive}
The operator  $D$ is said to be \emph{coercive at infinity} if there exists $C > 0$ and a compact $ K\subset M$ such that 
$$
\norm{u}_{\Lp{2}( M; E)}\leq C\,\norm{D u}_{\Lp{2}( M; F)}
$$
for all $u\in\Ck{\infty}( M; E)$  such that $\spt u \subset M\setminus  K$.
\end{definition}

If $M$ itself is compact, then we can choose $K=M$ and $D$ is automatically coercive at infinity.

Elliptically regular boundary conditions give rise to Fredholm operators when the underlying operator $D$ and its formal adjoint $D^\dagger$ are coercive at infinity. 

\begin{theorem}[Fredholmness \cite{BBan}*{Thm.~2.19}]
\label{Thm:Fredholm}
Assume \ref{Hyp.StdFirst}--\ref{Hyp.StdLast}. 
Let $D$ and $D^\dagger$ be coercive at infinity and let $B$ be an $\infty$-elliptic boundary condition for $D$.
Then, the following hold:
\begin{enumerate}[(i)]
\item  \label{Thm:Fredholm:1}
$D_B$ is a Fredholm operator and 
	$$ \Ind(D_B) = \dim \ker D_B - \dim \ker D^\dagger_{B^\dagger} \in \In.$$
\item \label{Thm:Fredholm:2} 
Let $C$  be  a closed complementary subspace to $B$ in $\SobH{\frac12}(\dM;E)$ with an associated projection $Q\colon\SobH{\frac12}(\dM;E)\to \SobH{\frac12}(\dM;E)$ with kernel $B$ and image $C$.
Then
$$
L\colon \dom(D_{\max})\cap \SobH[loc]{1}(M;E) \to \Lp{2}(M;F) \oplus C,\quad Lu := \big(D_{\max}u, Q(u\rest{\dM})\big)
$$
is a Fredholm operator with the same index as $D_B$.
\item  \label{Thm:Fredholm:3}
If $B' \subset B$ is another $\infty$-elliptic boundary condition, then $\dim \Big(\faktor{B}{B'}\Big) < \infty$ and 
	$$\Ind(D_B) = \Ind (D_{B'}) + \dim\Big(\faktor{B}{B'}\Big).$$
\end{enumerate} 
\end{theorem}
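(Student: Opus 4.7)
The plan is to combine the boundary regularity theorem for $\infty$-elliptic boundary conditions (Theorem~\ref{thm.BoundaryReg}), interior elliptic regularity, the adjoint identification $(D_B)^* = D^\dagger_{B^\dagger}$ from the preceding theorem, and coercivity at infinity. For part~(i), elements of $\ker D_B$ are smooth up to the boundary by Theorem~\ref{thm.BoundaryReg}. Given an $\Lp{2}$-bounded sequence $(u_n) \subset \ker D_B$, interior elliptic regularity yields uniform $\SobH[loc]{1}$-bounds on compacta and Rellich--Kondrachov produces $\Lp{2}$-convergence on any fixed compact. Coercivity at infinity, applied to $\chi u_n$ for a smooth cutoff $\chi$ vanishing on a large compact $K \subset M$ and equal to $1$ outside a larger compact $K'$, bounds $\norm{u_n}_{\Lp{2}(M\setminus K')}$ by $\norm{\sym_D(d\chi)u_n}_{\Lp{2}}$, which is itself controlled via Rellich convergence on $\spt(d\chi) \subset K' \setminus K$. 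Hence $\dim \ker D_B < \infty$. For the closed-range property, given $Du_n \to f$ with $u_n \in \dom(D_B)$ and $u_n \perp \ker D_B$, the same compactness package shows $\norm{u_n}_{\Lp{2}}$ is bounded (by contradiction: if not, normalise and extract a nonzero element of $\ker D_B \cap (\ker D_B)^\perp$). Extracting a graph-norm convergent subsequence places $f$ in $\ran(D_B)$. Finally, $\operatorname{coker}(D_B) \cong \ker((D_B)^*) = \ker D^\dagger_{B^\dagger}$ delivers the index formula.

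For part~(ii), the key auxiliary fact is an extension lemma: any $c \in C \subset \SobH{\frac{1}{2}}(\dM;E)$ lifts to some $w \in \dom(D_{\max}) \cap \SobH[loc]{1}(M;E)$ with $w|_{\dM} = c$, obtained by the standard $\SobH{\frac{1}{2}} \to \SobH{1}$ extension composed with a cutoff near $\dM$. Directly $\ker L = \ker D_B$ since $Lu = 0$ forces $Du = 0$ and $u|_{\dM} \in \ker Q = B$; and the extension lemma shows that $(\phi, c) \in \ran(L)$ iff $\phi - Dw \in \ran(D_B)$, so $\operatorname{coker}(L) \cong \operatorname{coker}(D_B)$ and $\Ind(L) = \Ind(D_B)$. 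For part~(iii), applying the same extension lemma to any $b \in B$ shows that
$$
0 \to \dom(D_{B'}) \hookrightarrow \dom(D_B) \xrightarrow{u \mapsto [u|_{\dM}]} B/B' \to 0
$$
is exact. The snake lemma applied to the commutative diagram with vertical arrows $D_{B'}, D_B, 0$ and bottom row $0 \to \Lp{2}(M;F) \xrightarrow{\id} \Lp{2}(M;F) \to 0$ then produces the six-term exact sequence
$$
0 \to \ker D_{B'} \to \ker D_B \to B/B' \to \operatorname{coker}(D_{B'}) \to \operatorname{coker}(D_B) \to 0.
$$
Finiteness of $\dim(B/B')$ follows from the four outer terms being finite-dimensional by~(i), and the alternating sum of dimensions yields $\Ind(D_B) = \Ind(D_{B'}) + \dim(B/B')$.

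The main obstacle is the closed-range statement in~(i), which is delicate because $M$ is non-compact. Coercivity at infinity substitutes for a missing global Poincaré inequality, and its interplay with boundary regularity and Rellich compactness must be orchestrated by cutoffs so that the only residual error terms live on a collar, where the previously-established $\Lp{2}$-precompactness can absorb them. A minor secondary point is that part~(ii) is stated on the larger space $\dom(D_{\max}) \cap \SobH[loc]{1}(M;E)$ rather than $\dom(D_B)$; one must verify that the extra freedom in the domain is compensated exactly by the extra $C$-component in the codomain, which is the content of the extension lemma.
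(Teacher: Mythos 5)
The paper states this theorem with a citation to \cite{BBan}*{Thm.~2.19} and does not reproduce a proof, so there is no in-document argument to compare against. Your blueprint --- Rellich/cutoff compactness combined with coercivity at infinity for part~(i), a trace-extension lemma for parts~(ii) and~(iii), and snake-lemma bookkeeping for~(iii) --- is the standard one and is structurally sound; it is somewhat more direct than the route in \cite{BBan}, which organises the Fredholm analysis around the half-cylinder model operator $\tfrac{\partial}{\partial t}+A$ and the Fredholm-pair characterisation of elliptic boundary conditions.

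There is one genuine gap in the compactness engine of part~(i). You claim that ``interior elliptic regularity yields uniform $\SobH[loc]{1}$-bounds on compacta,'' but this is false for compacta meeting $\dM$, which is precisely where you need it (both the Rellich step on $K'$ and the control of $\sym_D(d\chi)u_n$ on $\spt(d\chi)$). Near the boundary the needed input is the \emph{a priori} boundary elliptic estimate for the elliptic boundary condition $B$: for a compact collar $K$ of $\dM$, $\norm{u}_{\SobH{1}(K)} \lesssim \norm{u}_{\Lp{2}(M;E)} + \norm{Du}_{\Lp{2}(M;F)}$ for all $u\in\dom(D_B)$. Theorem~\ref{thm.BoundaryReg} only gives the \emph{qualitative} inclusion $\dom(D_B)\subset \SobH[loc]{1}(M;E)$; the quantitative bound then follows from the closed graph theorem, using that $D_B$ is a closed operator, but this must be said. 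The same closedness of $D_B$ is also tacitly used when you conclude that the $\Lp{2}$-limit $v$ of the normalised subsequence lies in $\dom(D_B)$ rather than merely $\dom(D_{\max})$: graph-norm convergence hands you $v|_{\dM}$ only as an $\SobH{-\frac12}$-limit of elements of $B$, and $B$ is closed in $\SobH{\frac12}$ but not obviously in $\SobH{-\frac12}$, so the boundary condition does not automatically pass to the limit without closedness of $D_B$. Finally, in~(ii) the Fredholm claim presupposes a Banach-space norm on $\dom(D_{\max})\cap \SobH[loc]{1}(M;E)$ making $L$ bounded (a graph-type norm that also controls the $\SobH{\frac12}(\dM;E)$-norm of the trace); without specifying it, the identifications $\ker L = \ker D_B$ and $\operatorname{coker} L \cong \operatorname{coker} D_B$ do not by themselves yield Fredholmness of $L$.
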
 

\begin{example}
Let $E=F$ be the spinor bundle over a complete Riemannian spin manifold $M$ and $D$ the spinorial Dirac operator.
Then, if the scalar curvature of $M$ is uniformly positive outside a compact subset of $M$, the Dirac operator is coercive at infinity by the Lichnerowicz formula \cite{Lic}*{Eq.~(7)}.
Hence, if we impose $\infty$-elliptic boundary conditions $B$ such as the APS condition, then $D_B$ is Fredholm.
\end{example}

Let $B$ be an $\infty$-elliptic boundary condition. 
By considering a parameter $s \in [0,1]$ and defining 
$$B_s := W_+ \oplus \set{ v + sgv: v \in V_{-}\cap \SobH{\frac12}(\dM;E)},$$ 
we obtain a continuous family of boundary conditions $(B_s)_{s \in [0,1]}$.
This results in a continuous deformation $s \mapsto D_{B_{s}}$.
By deformation invariance of the index of Fredholm operators, we have for all $s \in [0,1]$
$$\Ind(D_{B}) = \Ind(D_{B_s}) = \Ind(D_{B_0}).$$
Since $B_0 = W_+ \oplus B_{\APS}(A)$, the index calculation of a general $\infty$-elliptic boundary condition can be reduced that for a finite-dimensional modification of the APS condition. 
This results in the formula 
\begin{equation}
\label{Eq:Deform}
\Ind(D_{B}) = \Ind (D_{B_{\APS}(A)}) + \dim W_+ - \dim W_-.
\end{equation}

\section{The matching boundary condition and relative index theory}
\label{Sec.Match} 

In this final section, we apply the theory to derive a relative index theorem.
This requires the introduction of a new $\infty$-elliptic boundary condition, the matching condition, see Definition~\ref{Def.Matching} below.

For the remainder of this subsection, let $M^\prime$ be a boundaryless manifold. 
Let $ N\subset M^{\prime}$ be a two-sided compact hypersurface in $ M^{\prime}$ (i.e.\ $ N$ has a trivial normal bundle).
Then by ``cutting along $ N$'', we obtain the manifold with boundary
\[
 M:=( M^{\prime}\setminus N) \cup( N_{1}\sqcup N_{2}) ,
\]
where $ N_{1}= N$, $ N_{2}=- N$ (i.e.\ with opposite orientation) and with $\partial M= N_{1}\sqcup N_{2}$.

We obtain a natural smooth map $\Xi\colon M\to M'$ which is a diffeomorphism onto $M'\setminus N$ on the interior of $M$ and maps $N_i$ diffeomorphically onto $N$.

A density $\mu^{\prime}$ on $ M^{\prime}$ and bundles $ E', F'\to M^{\prime}$ can be pulled back along $\Xi$ to yield corresponding objects $\mu$, $E$, $F$ on $M$.
Similarly, an operator $D'\colon\Ck{\infty}( M^{\prime}, E')\to\Ck{\infty}( M^{\prime}, F')$ induces an operator $D\colon\Ck{\infty}( M, E)\to\Ck{\infty}( M, F)$.

\begin{definition}[Matching condition]
\label{Def.Matching}
The subspace 
\[
\Bmatch:=\set{ (u,u) \in \SobH{\frac12}( N_1;  E) \oplus \SobH{\frac12}( N_2;  E) : u\in\SobH{\frac{1}{2}}( N; E) }  \subset  \SobH{\frac12}(\dM;E)
\]
is called the \emph{matching condition} where we identify $N_1$ and $N_2$ with $N$.
\end{definition}

We choose an adapted operator $A_N$ for $D'$ on the hypersurface $N$, see Remark~\ref{Rem.AdOp}.
Replacing $A_N$ by $A_N+r\,\id$ for some $r\in\R$ if necessary, we can assume that $A_N$ is invertible and bisectorial.
Now $A := A_N \oplus (-A_N)$ is an invertible bisectorial adapted boundary operator for $D$ on $\dM = N_1 \sqcup N_2$.
Upon identifying $N_1$ and $N_2$ with $N$, we observe that 
\begin{align*} 
B_{\APS}(A) 
&= \chi^-(A_N) \SobH{\frac12}(N_1;E) \oplus \chi^-(-A_N) \SobH{\frac12}(N_2;E)  \\ 
&= \chi^-(A_N) \SobH{\frac12}(N;E) \oplus \chi^+(A_N) \SobH{\frac12}(N;E).
\end{align*} 
Putting 
\begin{align*}
V_- 
&:=
\chi^-(A_N) \Lp{2}(N;E) \oplus \chi^+(A_N) \Lp{2}(N;E), \\
V_+ 
&:=
\chi^+(A_N) \Lp{2}(N;E) \oplus \chi^-(A_N) \Lp{2}(N;E), \\
W_-&:= W_+ := 0,\\
g&\colon \Lp{2}(\dM;E)\to \Lp{2}(\dM;E),\\ 
g|_{V_-}&\colon V_-\to V_+, \quad (u,v) \mapsto (v,u), \quad\text{ and }\quad g|_{V_+}=0,
\end{align*}
we find 
\begin{align*}
\Bmatch = \{x+gx : x\in V_-\cap \SobH{\frac12}(\dM;E)\} .
\end{align*}
If we assume that $D'$ is coercive at infinity, then so is $D$ on $M$.
Hence, we get Fredholm operators and Equation~\eqref{Eq:Deform} yields
\begin{equation} 
\label{Eq:MatchDef} 
\Ind(D') = \Ind(D_{\Bmatch}) = \Ind(D_{B_{\APS}(A)}).
\end{equation} 

\begin{theorem}[Relative index theorem \cite{BRelIndex}*{Theorem 1.1}]
\label{Thm:RelInd}
Let $(M_1,\mu_1,E_1,F_1,D_1)$ and $(M_2,\mu_2,E_2,F_2,D_2)$ satisfy Assumptions~\ref{Hyp.StdFirst}--\ref{Hyp.StdLast} with $\dM_1=\dM_2=\emptyset$.
Assume there exist compact subsets $K_1\subset M_1$ and $K_2\subset M_2$ such that $\mu_1=\mu_2$, $E_1=E_2$, $F_1=F_2$ and $D_1=D_2$ on $M_1\setminus K_1=M_2\setminus K_2$.

Then $D_{1}$ is Fredholm  if and only if  $D_{2}$ is Fredholm and in that case
\begin{equation}
\Ind(D_{1}) -\Ind(D_{2}) =\int_{K_{1}}\alpha_{0,D_{1}} -\int_{K_{2}}\alpha_{0,D_{2}},
\label{eq.RelInd}
\end{equation}
where $\alpha_{0,D_{i}}$ is the \emph{local} index density of $D_{i}$.  
\end{theorem}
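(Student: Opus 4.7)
\emph{Proof plan.} The strategy is to apply the matching-condition reduction of Section~\ref{Sec.Match} to each $M_i$, cutting along a common hypersurface contained in the region where $D_1$ and $D_2$ coincide. This splits the index of each $D_i$ into a contribution from a compact piece containing $K_i$ and a contribution from a non-compact piece that is canonically identified across $i=1,2$. The non-compact contributions cancel in the difference, and an APS-type index formula on the compact pieces supplies the bulk integrals while eliminating a common boundary correction term.

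\emph{Execution.} First, choose a smooth compact hypersurface $N$ inside the identified region $M_1\setminus K_1 = M_2\setminus K_2$ that, in each $M_i$, bounds a compact codimension-$0$ submanifold $K_i' \supset K_i$ with $\partial K_i' = N$. Since $D_1=D_2$ and $\mu_1=\mu_2$ on $K_i'\setminus K_i$, the local index densities agree, $\alpha_{0,D_1}=\alpha_{0,D_2}$ there, so the right-hand side of~\eqref{eq.RelInd} is unchanged after replacing $K_i$ by $K_i'$. Cutting $M_i$ along $N$ produces the disjoint union $\widetilde M_i = K_i' \sqcup (M_i\setminus\mathring K_i')$, in which the second summand together with all associated data is common to $i=1,2$. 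Second, select an adapted operator $A_N$ for the common $D$ on $N$, shifted by a real multiple of the identity so as to be invertible and bisectorial, and set $A := A_N \oplus (-A_N)$ on $\partial \widetilde M_i = N \sqcup (-N)$. The matching identity~\eqref{Eq:MatchDef} yields
$$
\Ind(D_i) = \Ind\big((D_i)_{B_{\APS}(A)}\big),
$$
and because $B_{\APS}(A)$ splits as $B_{\APS}(A_N) \oplus B_{\APS}(-A_N)$ across the two components of $\widetilde M_i$,
$$
\Ind(D_i) = \Ind\big((D_i|_{K_i'})_{B_{\APS}(A_N)}\big) + \Ind\big((D_i|_{M_i\setminus\mathring K_i'})_{B_{\APS}(-A_N)}\big).
$$
The second summand depends only on the common outside, so it is independent of $i$; subtracting the two identities eliminates it. Third, each $K_i'$ is a compact manifold with boundary $N$, and an APS-type index formula in the generality developed in \cite{BBan} gives
$$
\Ind\big((D_i|_{K_i'})_{B_{\APS}(A_N)}\big) = \int_{K_i'} \alpha_{0,D_i} + \xi(A_N),
$$
where the boundary correction $\xi(A_N)$ depends only on the common adapted operator $A_N$. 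This correction cancels in the difference, which then reduces to $\int_{K_1'}\alpha_{0,D_1}-\int_{K_2'}\alpha_{0,D_2}=\int_{K_1}\alpha_{0,D_1}-\int_{K_2}\alpha_{0,D_2}$, yielding~\eqref{eq.RelInd}. The Fredholm equivalence follows from the same decomposition: coercivity at infinity depends only on the common outside piece, and hence Fredholmness of $(D_i)_{B_{\APS}(A)}$ via Theorem~\ref{Thm:Fredholm}, so $D_1$ is Fredholm if and only if $D_2$ is.

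\emph{Main obstacle.} The analytic core is the third step, namely the APS-type index formula on the compact pieces $K_i'$ that splits cleanly into a local bulk integral and a boundary correction depending only on $A_N$. In the self-adjoint setting (Dirac type with $A_N$ self-adjoint) this is the classical Atiyah-Patodi-Singer theorem, but the generality pursued in this paper --- and in particular the Rarita-Schwinger case (Example~\ref{ex.RSAdOp}), where $A_N$ cannot be chosen self-adjoint --- requires appealing to the extended index theory of \cite{BBan} with a bisectorial, non-self-adjoint boundary operator. The remaining steps are formal consequences of the matching-condition identity~\eqref{Eq:MatchDef} and of the direct-sum behaviour of APS boundary data across a disjoint union.
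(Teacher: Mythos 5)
Your proof has a genuine gap in the third step. You invoke an ``APS-type index formula'' on the compact pieces $K_i'$ of the form
\[
\Ind\big((D_i|_{K_i'})_{B_{\APS}(A_N)}\big) = \int_{K_i'} \alpha_{0,D_i} + \xi(A_N),
\]
and you correctly flag this as the ``main obstacle.'' The trouble is that this formula is not available in the generality of the present paper, and \cite{BBan} does not supply it: for a non-self-adjoint adapted operator $A_N$ (such as the one arising from the Rarita-Schwinger operator), there is no established eta-type boundary correction $\xi(A_N)$ splitting off cleanly from a local bulk integral. So you are appealing to precisely the theorem that the relative index theorem is designed to work around, not a tool you can take off the shelf.

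The paper's proof avoids this entirely by a capping argument. After reducing to $\Ind(D_1) - \Ind(D_2) = \Ind(D_{1,B'}') - \Ind(D_{2,B'}')$ on the compact pieces $M_i'$ (your $K_i'$), it chooses a single compact manifold $M_3$ with boundary $-N$, carrying data that glue smoothly onto $M_1'$ (and hence also onto $M_2'$, since $M_1'$ and $M_2'$ agree near $N$). Running the matching-condition decomposition \emph{in reverse} on the closed manifolds $M_i' \sqcup_N M_3$ gives $\Ind(D_i' \oplus D_3) = \Ind(D_{i,B'}') + \Ind(D_{3,B''})$, so the $D_3$ contribution cancels in the difference and
\[
\Ind(D_1) - \Ind(D_2) = \Ind(D_1'\oplus D_3) - \Ind(D_2'\oplus D_3).
\]
Now $M_i'\sqcup_N M_3$ is closed, so the classical Atiyah-Singer theorem applies directly, the $\int_{M_3}\alpha_0(D_3)$ terms cancel, and the formula follows with no boundary correction ever needing to be defined. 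Your first, second, and fourth observations (the reduction along $N$, the splitting of $B_{\APS}(A)$, the Fredholm equivalence via coercivity) all match the paper; what you are missing is this gluing step that replaces the APS-with-boundary formula by the closed Atiyah-Singer theorem.
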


\begin{proof}[Sketch of proof]
The operator $D_{1}$ is Fredholm if and only if $D_{1}$ and $D_{1}^\dagger$ are coercive at infinity.
Extending the compact set where $D_1, D_1^\dagger$ are coercive to include $K_1$, we see that $D_2 = D_1, D_2^\dagger = D_1^\dagger$ on $M_2 \setminus K_2$ satisfies the coercivity property outside of $K_2$.
This is equivalent to the Fredholmness of $D_2$. 

We take $N \subset M_1 \setminus K_1 = M_2 \setminus K_2$ a smooth compact $2$-sided hypersurface which decomposes $M_1$ and $M_2$ such that $M_1=M_1'\sqcup_N M_1''$  and $M_2=M_2'\sqcup_N M_2''$, respectively.
Here $M_i'$ is compact and contains $K_i$ and $M_1''=M_2''$.

Let $\tilde{D}_1 = D_1' \oplus D_1''$ and $\tilde{D}_2 = D_2'\oplus D_2''$ be the induced operators on $\tilde{M}_1:= M_1'\sqcup M_1''$ and $\tilde{M}_2:= M_2'\sqcup M_2''$.
Both $\tilde{M}_1$ and $\tilde{M}_2$ have the same boundary $N\sqcup (-N)$ on which we impose the APS-boundary condition $B_{\APS}(A)=B'\oplus B''$ where $B' := \chi^-(A_N) \SobH{\frac12}(N;E)$ and $B'' = \chi^-(-A_N) \SobH{\frac12}(N;E)$. 
Equation~\eqref{Eq:MatchDef} yields
\begin{align*}
\Ind(D_1) &= \Ind(\tilde{D}_{1, \Bmatch}) = \Ind(\tilde{D}_{1, B_{\APS}(A)}) = \Ind(D_{1, B'}') + \Ind(D_{1, B''}''),   \\
\Ind(D_2) &= \Ind(\tilde{D}_{2, \Bmatch}) = \Ind(\tilde{D}_{2, B_{\APS}(A)}) = \Ind(D_{2, B'}') + \Ind(D_{2, B''}'').
\end{align*}
Since $D_1'' = D_2''$,
\[
\Ind(D_1) - \Ind(D_2) =  \Ind(\tilde{D}_{1, B'}') - \Ind(\tilde{D}_{2, B'}').
\]
We choose $(M_3,\mu_3,E_3,F_3,D_3)$ where $M_3$ is compact and has boundary $-N$ such all data match smoothly on $M_1'\sqcup_N M_3$.
Since $M_1'$ and $M_2'$ and their data agree on a neighbourhood of $N$, the data also match smoothly on $M_2'\sqcup_N M_3$.
Arguing as above yields
\[
\Ind(D_1'\oplus D_3) - \Ind(D_2'\oplus D_3) =  \Ind(\tilde{D}_{1, B'}') - \Ind(\tilde{D}_{2, B'}').
\]
Since $M_i'\sqcup_N M_3$ is closed, the Atiyah-Singer index theorem gives us
\begin{align*}
\Ind(D_i'\oplus D_3) 
&= 
\int_{M_i'\sqcup_N M_3} \alpha_0(D_i'\oplus D_3) \\
&=
\int_{M_i'} \alpha_0(D_i') + \int_{M_3} \alpha_0(D_3) \\
&=
\int_{M_i'} \alpha_0(D_i) + \int_{M_3} \alpha_0(D_3) .
\end{align*}
Therefore,
\begin{equation*}
\Ind(D_1) - \Ind(D_2)
=
\int_{M_1'} \alpha_0(D_1) - \int_{M_2'} \alpha_0(D_2)
=
\int_{K_1} \alpha_0(D_1) - \int_{K_2} \alpha_0(D_2) .
\qedhere
\end{equation*}
\end{proof}

\begin{remark}
In Theorem~\ref{Thm:RelInd} we can allow $M_1$ and $M_2$ to have non-empty compact boundary, equipped with $\infty$-elliptic boundary conditions for $D_1$ and $D_2$, respectively.
The theorem still holds but we get additional boundary terms in the index formula~\eqref{eq.RelInd}.
\end{remark}

\section{Note on higher regularity}
\label{Sec.HighReg}

Regularity of higher regularity of sections in the maximal domain with respect to a boundary condition depends on the boundary condition itself.
In this subsection, we show how to characterise $k$-regularity of a boundary condition $B$ (in the sense of \cite{BBan}*{Definition~2.11}) via the boundary trace map.
We begin with the following technical lemma which aids the proof of this characterisation as given in Theorem~\ref{Thm.HigherReg}. 

\begin{lemma}
\label{Lem.Density}
Let $N$ be a compact manifold satisfying assumptions \ref{Hyp.StdFirst}--\ref{Hyp.StdLast} with $D\colon \Ck{\infty}(N;E) \to \Ck{\infty}(N;F)$ a first-order elliptic operator.
Let $j\in\N$.
If $u \in \dom(D_{\max})$ with $u \in \SobH{j}(N;E)$ and  $D_{\max}u \in \SobH{j}(N;F)$, then there exists a sequence $u_n \in \Ck{\infty}(N;E)$ such that $u_n \to u$ in $\SobH{j}(N;E)$ and $D_{\max} u_n \to D_{\max} u$ in $\SobH{j}(N;F)$.
\end{lemma}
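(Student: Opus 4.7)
The plan is to localise via a finite partition of unity and then apply Friedrichs mollification --- a standard mollifier in interior charts and a directional (one-sided) mollifier in charts meeting $\partial N$. Pick a partition $\{\phi_k\}$ subordinate to coordinate charts on $N$, some contained in the interior and the rest modelled on $\R^n_+ = \{x_n \ge 0\}$. Since $D$ is first-order, $D(\phi_k u) = \phi_k Du + \sym_D(d\phi_k) u$, so multiplication by $\phi_k \in \Ck[c]{\infty}$ preserves the hypotheses: both $\phi_k u$ and $D(\phi_k u)$ lie in $\SobH{j}$. It therefore suffices to approximate each $\phi_k u$ separately and sum the results.

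For an interior chart identified with $\R^n$, apply a standard convolution mollifier $J_\epsilon$. Then $J_\epsilon u \to u$ in $\SobH{j}$, and writing $D(J_\epsilon u) = J_\epsilon Du + [D, J_\epsilon] u$, the first term converges to $Du$ in $\SobH{j}$ since $Du \in \SobH{j}$. For the commutator, using that $J_\epsilon$ commutes exactly with $\partial_i$, a direct computation rewrites $\partial^\alpha [D, J_\epsilon] u$ for $|\alpha|\le j$ as a finite sum of terms of the form $[a, J_\epsilon] \partial^\beta u$ with $a$ smooth and $|\beta|\le j$; each such piece converges to $0$ in $\Lp{2}$ by the classical Friedrichs commutator lemma (as $\partial^\beta u \in \Lp{2}$), so $[D, J_\epsilon] u \to 0$ in $\SobH{j}$.

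For a boundary chart identified with an open subset of $\R^n_+$, a direct mollification would sample values outside $\R^n_+$. Replace $J_\epsilon$ with a directional mollifier $J_\epsilon^+$ whose kernel $\rho_\epsilon$ is supported in $\{z_n \le -\epsilon\}$: for $x \in \R^n_+$, the convolution $J_\epsilon^+ u(x) = \int \rho_\epsilon(x-y) u(y)\,dy$ only sees $u(y)$ with $y_n \ge x_n + \epsilon > 0$, so $J_\epsilon^+ u$ is well-defined and smooth on $\R^n_+$ up to and including the boundary. All the interior computations depend only on the translation-invariance and approximation-of-identity properties of the mollifier, which $J_\epsilon^+$ shares, so the same argument yields $J_\epsilon^+ u \to u$ and $D(J_\epsilon^+ u) \to Du$ in $\SobH{j}(\R^n_+)$. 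Reassembling via the partition of unity produces a sequence in $\Ck{\infty}(N;E)$ with the required convergence.

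The main technical obstacle is the $\SobH{j}$-refinement of Friedrichs' commutator lemma: the classical form gives only $\Lp{2}$-convergence of $[a, J_\epsilon] v$ for $v \in \Lp{2}$, and the lift to $\SobH{j}$ rests on the exact commutativity of $J_\epsilon$ (and $J_\epsilon^+$) with $\partial_i$ together with smoothness of the coefficients of $D$, so that cascading $\partial^\alpha$ through $[D, J_\epsilon]$ generates only lower-order commutators applied to derivatives of $u$ of order $\le j$, each still in $\Lp{2}$. The directional mollifier trick is then precisely what lets us avoid combining a normal translation with mollification, which would otherwise lose one degree of regularity on $Du_h$ and fail to give convergence in $\SobH{j}$.
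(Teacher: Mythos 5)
Your proof is correct in substance, but it takes a genuinely different route from the paper's. The paper does not localise and mollify. Instead it cites \cite{BGS}*{Theorem~6.1}, which describes the maximal domain in the $\SobH{j}$-topology as a sum
$\dom(D_{\max,j}) = \SobH{j+1}(N;E) + \Poiss\,\SobH{j-\frac12}(\partial N;E)$
where $\Poiss$ is the Poisson operator of $D$. Writing $u = v + \Poiss w$, one approximates $v$ in $\SobH{j+1}(N;E)$ and $w$ in $\SobH{j-\frac12}(\partial N;E)$ by smooth sections, and sets $u_n = v_n + \Poiss w_n$; since $\Poiss$ maps smooth boundary data to smooth sections and is bounded in the graph norm, the required convergence is immediate. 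What the paper's route buys is a very short proof at the cost of invoking the full Boutet de Monvel--type calculus that underlies the decomposition in \cite{BGS}. Your route buys self-containedness and a more elementary toolbox (partition of unity, directional mollifiers, Friedrichs' commutator lemma), at the cost of a longer argument. Both are valid; there is no missing idea in either.

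One point in your write-up deserves to be tightened. You assert that cascading $\partial^\alpha$ through $[D,J_\epsilon^+]$ for $|\alpha|\le j$ produces terms of the form $[a,J_\epsilon^+]\partial^\beta u$ with $|\beta|\le j$ and $\partial^\beta u\in\Lp{2}$. That bookkeeping is off by one in the top-order term: writing $D=\sum_i a_i\partial_i + a_0$ and using $[\partial_i,J_\epsilon^+]=0$, one finds
$\partial^\alpha\bigl([a_i,J_\epsilon^+]\partial_i u\bigr) = \sum_{\gamma\le\alpha}\binom{\alpha}{\gamma}\bigl[\partial^{\alpha-\gamma}a_i,J_\epsilon^+\bigr]\partial_i\partial^\gamma u$,
so the term $\gamma=\alpha$ reads $[a_i,J_\epsilon^+]\,\partial_i(\partial^\alpha u)$ with $\partial^\alpha u\in\Lp{2}$ but $\partial_i\partial^\alpha u\notin\Lp{2}$ in general. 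This is precisely the case Friedrichs' commutator lemma is designed for --- it gives $[a,J_\epsilon^+]\partial_i v\to 0$ in $\Lp{2}$ for $v\in\Lp{2}$, not merely $[a,J_\epsilon^+]w\to 0$ for $w\in\Lp{2}$ --- whereas all the lower-order terms $\gamma<\alpha$ and the zeroth-order terms are handled by the ordinary approximate-identity property. (One should also check, as you implicitly do, that the standard proof of Friedrichs' lemma goes through for the directional kernel on the half-space: the integration by parts that trades $\partial_i v$ for $v$ produces no boundary term at $\{y_n=0\}$ precisely because the kernel is supported in $\{z_n\le-\epsilon\}$.) With that correction your argument is complete.
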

\begin{proof}
Let $D_{\max,j}$ denote $D_{\max}$ acting as a densely-defined operator $\SobH{s}(N;E) \to \SobH{s}(N;F)$.
By \cite{BGS}*{Theorem~6.1}, we can write $\dom(D_{\max,j}) = \SobH{j+1}(N;F) + \Poiss_s \SobH{j-\frac12}(\partial N;E)$, where $\Poiss_s: \SobH{j-\frac12}(\partial N;E) \to \dom(D_{\max,s})$ is the Poisson operator. 
Hence, we can write $u = v + Kw$.
Now, choose $v_n \to v$ in $\SobH{j+1}(N;E)$ with $v_n \in \Ck{\infty}(N;E)$ and $w_n \to w$ in $\SobH{j-\frac12}(\partial N;E)$ with $w_n \in \Ck{\infty}(\partial N;E)$.
Define $u_n := v_n + \Poiss_s w_n$
We have $u_n \in \Ck{\infty}(N;E)$ since $\Poiss_s$ is the Poisson operator and maps smooth sections to smooth sections.
With this, 
\begin{align*}
\norm{(u_n - u)}_{D_{\max,j}} 
&\leq 
\norm{v_n - v}_{D_{\max,j}} + \norm{\Poiss_s (w_n - w)}_{D_{\max,s}} \\ 
\leq
&C_{1,j}  \norm{v_n - v}_{\SobH{j+1}(N;E)} + C_{2,j}\norm{w_n - w}_{\SobH{j-\frac12}(\partial N; E)}
\to 0
\end{align*}
as $n \to \infty$ where $C_{1,j}, C_{2,j} < \infty$ are mapping constants dependent on $j$.
This yields $u_n \to u$ in $\SobH{j}(N;E)$ and $D_{\max}u_n \to D_{\max}u$ in $\SobH{j}(N;F)$.
\end{proof} 

\begin{theorem}
\label{Thm.HigherReg}
Assume \ref{Hyp.StdFirst}--\ref{Hyp.StdLast}.
Let $B\subset \SobH{\frac12}(\dM;E)$ be a closed subspace.
Then for all $k\in\N_0$ the following are equivalent:
\begin{enumerate}
\item 
\label{It.HighReg1} 
$B$ is a $k$-regular elliptic boundary condition for $D$ (in the sense of \cite{BBan}*{Definition~2.11} w.r.t.\ an adapted boundary operator $A$);
\item
\label{It.HighReg2}
for all $j=0,\dots,k-1$ we have:
\begin{align}
\label{Eq.InfRegChar1} 
\dom&(D_{B,\max}) \cap \SobH[loc]{j+1}(M;E) \nonumber  \\
&= 
\set{u\in\dom(D_{B,\max}): Du\in\SobH[loc]{j}(M;F)\text{ and }u\rest{\dM}\in\SobH{j+\frac12}(\dM;E)} ,\\ 
\label{Eq.InfRegChar2} 
\dom&(D^\dagger_{B^\dagger,\max})\cap \SobH[loc]{j+1}(M;F) \nonumber  \\
&=   
\set{u\in\dom(D^\dagger_{B^\dagger,\max}): D^\dagger u\in\SobH[loc]{j}(M;E)\text{ and }u\rest{\dM}\in\SobH{j+\frac12}(\dM;F)} .
\end{align} 
\end{enumerate}
\end{theorem}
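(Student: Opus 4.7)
The proof divides into forward and backward directions.

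\emph{Forward direction (\ref{It.HighReg1}) $\Rightarrow$ (\ref{It.HighReg2}).} This follows from the existing apparatus of \cite{BBan}. A $k$-regular boundary condition $B$ satisfies the $k$-regular version of the boundary regularity theorem \cite{BBan}*{Thm.~2.12} (the finite-regularity refinement of Theorem~\ref{thm.BoundaryReg}), giving \eqref{Eq.InfRegChar1}. By the $k$-regular analogue of \cite{BBan}*{Prop.~8.2}, $B^\dagger$ is also $k$-regular for $D^\dagger$; applying the boundary regularity theorem to $(D^\dagger, B^\dagger)$ then yields \eqref{Eq.InfRegChar2}.

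\emph{Backward direction (\ref{It.HighReg2}) $\Rightarrow$ (\ref{It.HighReg1}).} This is the novel content. The plan is to argue by induction on $k$. The base case $k=0$ is vacuous and reduces to $B$ being $\tfrac12$-elliptic, yielding the graphical decomposition $B = W_+\oplus \set{v+gv : v \in V_-\cap \SobH{\tfrac12}(\dM;E)}$ of Definition~\ref{Def.ER}. Inductively, assume $B$ is $(k-1)$-regular; it remains to upgrade the Sobolev preservation of $g$ by one order, namely to prove $g(V_- \cap \SobH{k-\tfrac12}(\dM;E)) \subset V_+ \cap \SobH{k-\tfrac12}(\dM;E)$ and the analogous statement for $g^*$ on $\widetilde V_+$. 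Since $W_\pm,\widetilde W_\pm$ are smooth and finite-dimensional, the projection onto $V_+$ along $V_-\oplus W_+\oplus W_-$ is bounded on every Sobolev space, so this inclusion is equivalent to $v+gv \in \SobH{k-\tfrac12}(\dM;E)$ for each $v \in V_- \cap \SobH{k-\tfrac12}(\dM;E)$.

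\emph{Key step.} Fix such a $v$ and aim to construct $u \in \dom(D_B) \cap \SobH[loc]{k}(M;E)$ with $u\rest{\dM} = v+gv$; Theorem~\ref{Thm:CTrace} then delivers the conclusion. Let $u_0 = \chi\,\Poiss_A(v+gv) \in \dom(D_B)$ be obtained by cutting off the Poisson extension of $v+gv$ with $\chi \in \Ck[c]{\infty}(M)$ equal to $1$ near $\dM$; then $Du_0$ is smooth of compact support. Separately, the trace theorem together with Lemma~\ref{Lem.Density} provides a collar lift $u' \in \SobH[loc]{k}(M;E)$ with $u'\rest{\dM} = v$ and $Du' \in \SobH[loc]{k-1}(M;F)$. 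Setting $u'' := u_0 - u' \in \dom(D_{\max})$, we have $u''\rest{\dM} = gv \in V_+$ and $Du'' \in \SobH[loc]{k-1}(M;F)$. By Theorem~\ref{Thm:Trace}\ref{Thm:Trace4}, $u'' \in \SobH[loc]{k}(M;E)$ iff $\chi^+(A)(gv) = gv$ lies in $\SobH{k-\tfrac12}(\dM;E)$.

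\emph{Main obstacle.} The preceding reduction exposes a circularity: the desired Sobolev regularity of $gv$ is equivalent to the conclusion. Breaking this circularity is the principal technical challenge. The strategy is to extract control on $u''$ by testing against the adjoint side: by the hypothesis \eqref{Eq.InfRegChar2} at $j = k-1$ and the Poisson lift for $D^\dagger$, there is an abundant supply of test sections $w \in \dom(D^\dagger_{B^\dagger}) \cap \SobH[loc]{k}(M;F)$ whose traces exhaust $B^\dagger \cap \SobH{k-\tfrac12}(\dM;F)$. Using Green's formula from Theorem~\ref{Thm:Trace}\ref{Thm:Trace3} applied to $u''$ and such a $w$,
\[
\inprod{u''\rest{\dM},\,\sym_0^*(w\rest{\dM})}_{\Lp{2}(\dM;E)} = \inprod{u'', D^\dagger w}_{\Lp{2}(M;E)} - \inprod{Du'', w}_{\Lp{2}(M;F)} ,
\]
the right-hand side is controlled by interior Sobolev norms that are finite under the inductive hypothesis. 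Ranging over such $w$, whose $\sym_0^*$-images densely span $\sym_0^* B^\dagger \cap \SobH{k-\tfrac12}$, one bounds the pairing $\inprod{gv,\,\cdot\,}$ on the relevant test class and concludes $gv \in \SobH{k-\tfrac12}(\dM;E)$ by duality. The argument for $g^*$ is symmetric, interchanging the roles of $(D,B)$ and $(D^\dagger,B^\dagger)$ and using \eqref{Eq.InfRegChar1} in place of \eqref{Eq.InfRegChar2}.
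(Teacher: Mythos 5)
The reduction you identify---that $B$ is $k$-regular iff the map $g$ and the spaces $W_\pm$ enjoy the extra Sobolev order---matches the logical skeleton of the paper's proof, but the mechanism you use to extract the half-derivative is different and, as written, has a gap.

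The paper's argument transfers the problem to the half-cylinder $Z_\infty=[0,\infty)\times\dM$ with the model operator $D'=\partial_t+A$ (this transfer itself takes some work: a cutoff, a comparison estimate from \cite{BBan}*{Eq.~(39)} between $D$ and $D'$ near $\dM$, and Lemma~\ref{Lem.Density} to justify a density argument; your sketch bypasses all of this by working on $M$ with a Poisson lift and a ``collar lift'', which would need justification). On the model cylinder the Poisson extension is explicit, $\ext(v_j+gv_j)(t)=\exp(-t|A|)(v_j+gv_j)$, and the decisive algebraic observation is that $D'\ext(v_j+gv_j)=-2|A|\exp(-t|A|)v_j$ \emph{depends only on} $v_j$, because $gv_j\in\chi^+(A)\Lp{2}$ is annihilated by $\partial_t+A$ acting on $\exp(-t|A|)$. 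Quadratic ($\mathrm{H}^\infty$) estimates for $|A|$ then give $\|D'\ext(v_j+gv_j)\|_{\SobH{j}(Z_\infty)}\lesssim\|v_j\|_{\SobH{j+\frac12}(\dM)}$, which is fed into the model regularity statement to conclude $gv_j\in\SobH{j+\frac12}$. In short, the gain of a half-derivative for $gv_j$ is extracted from a direct, quantitative a priori estimate, not from duality.

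Your duality step is where the argument breaks. You pair $gv=u''\rest{\dM}$ against $\sym_0^*(w\rest{\dM})$ for $w\in\dom(D^\dagger_{B^\dagger})\cap\SobH[loc]{k}$ via Green's formula, and the right-hand side $\inprod{u'',D^\dagger w}-\inprod{Du'',w}$ is an $\Lp{2}$ quantity, so the best bound it produces is in terms of the graph norms of $u''$ and $w$. That bounds the $\Lp{2}(\dM)$ boundary pairing by quantities comparable to $\|\phi\|_{\SobH{-1/2}(\dM)}$ at best (where $\phi=\sym_0^*(w\rest{\dM})$), whereas to conclude $gv\in\SobH{k-\frac12}(\dM;E)$ by duality you would need a bound of the form $|\inprod{gv,\phi}|\lesssim\|\phi\|_{\SobH{-(k-\frac12)}(\dM)}$ for $\phi$ in a set dense in $\SobH{-(k-\frac12)}(\dM;E)$. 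For $k\geq 2$, $\SobH{-\frac12}\subsetneq\SobH{-(k-\frac12)}$, so the graph-norm bound is in the wrong direction; moreover you test only against $\phi$ ranging over a subspace of $\SobH{k-\frac12}$ (not $\SobH{-(k-\frac12)}$), which is too smooth a test class to detect the target Sobolev regularity. Without some additional ingredient that converts this into a genuine $\SobH{-(k-\frac12)}$-bound, the argument does not deliver the needed gain of derivatives and, as you noticed yourself, you are left facing the same circularity you set out to break.
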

\begin{proof}
The implication ``\eqref{It.HighReg1}$\implies$\eqref{It.HighReg2}'' is proved in \cite{BBan}*{Theorem~2.12}.

To prove ``\eqref{It.HighReg2}$\implies$\eqref{It.HighReg1}'', we first reduce the question to that of a ``model'' problem.
For that, let $Z_\rho = [0,\rho) \times \dM$ for $\rho \in (0, \infty]$, which has boundary $\partial Z_{\rho} = \dM$.

Set $\rho = \infty$ and let $D' = (\frac{\partial}{\partial t} + A)\colon \Ck{ \infty}(Z_\infty;E) \to \Ck{ \infty}(Z_\infty;E)$ be the model operator. 
Here $t$ denotes the coordinate on $[0,\infty)$.

The standard setup \ref{Hyp.StdFirst}--\ref{Hyp.StdLast} is satisfied for the manifold $Z_\infty$ with the measure $|dt|\otimes\nu$, the vector field $\frac{\partial}{\partial t}$ along $\dM$, the bundles obtained by pulling back the restrictions of $E$ and $F$ to $\dM$, and the operator $D'$.
Here $\nu$ is the measure induced by $T$ and $\mu$ on $\dM$.

The operator $D'_{B,\max}$ is the extension with $$\dom(D_{B,\max}') = \set{u \in\dom(D'_{\max}): u \rest{\dM}\in B}.$$
We show that \eqref{Eq.InfRegChar1} implies the corresponding statement for $Z_\infty$, i.e., for all $j=0,\dots,k-1$ we have: 
\begin{equation} 
\begin{aligned}
\label{Eq.InfRegChar3} 
\dom&(D_{B,\max}')\cap \SobH[loc]{j+1}(Z_{\infty};E) \\
&= 
\set{w\in\dom(D_{B,\max}'): D'w\in\SobH[loc]{j}(Z_{\infty};E)\text{ and }w\rest{\dM}\in\SobH{j+\frac12}(\dM;E)}.
\end{aligned}
\end{equation} 

We only need to show the inclusion ``$\supset$''.
Fix $j \in \set{0,\dots,k-1}$ and $w\in\dom(D_{B,\max}')$ with  $D'w\in\SobH[loc]{j}(Z_{\infty};E)$ and $w\rest{\dM}\in\SobH{j+\frac12}(\dM;E)$.
Without loss of generality, we can inductively assume that $w \in \SobH[loc]{j}(Z_{\infty};E)$.
By Lemma~2.4 in \cite{BB12}, there exists $\rho_0\in (0,\infty)$ and an open neighbourhood $U_{\rho_0}$ of $\dM$ in $M$ together with a diffeomorphism $U_{\rho_0}\to Z_{\rho_0}$ which preserves $\dM$ pointwise and identifies $T$ with $\frac{\partial}{\partial t}$, $\tau$ with $dt$, and the measure $\mu$ with $|dt|\otimes \nu$.

Fix $\theta < \rho_0$ to be chosen later. 
Let $\eta \in \Ck[c]{\infty}(Z_{\infty},\R)$ such that $\eta = 1$ on $Z_{\frac{\theta}{2}}$ and $\eta = 0$ outside of $Z_{\frac{3\theta}{4}}$.
Clearly $D'\big((1 - \eta) w\big)\in\SobH[loc]{j}(Z_{\infty};E)$ and hence $(1-\eta) w\in\SobH[loc]{j+1}(Z_{\infty};E)$ by interior elliptic regularity for $D'$.

To show $\eta w \in \SobH[loc]{j+1}(Z_{\infty};E)$, we define $u\in\SobH[loc]{j}(M;E)$ by putting $u = \eta w$ on $U_{\rho_0}$ under the identification with $Z_{\rho_0}$ and $u=0$ outside of $U_{\rho_0}$.
From $u\rest{\dM} = w\rest{\dM} \in B$ we see that $u \in \dom(D_B)$. 
In order to invoke  \eqref{Eq.InfRegChar1}, we show that $Du \in\SobH[loc]{j}(M;F)$.
To that end, note that from \cite{BBan}*{Equation~(39)}, for any $\epsilon > 0$, we have a $\rho$ such that 
$$ 
\norm{Dx}_{\SobH{j}(U_{\rho})} \leq \epsilon \norm{x}_{\SobH{j+1}(U_{\rho})} + \norm{D'x}_{\SobH{j}(U_{\rho})} + \norm{\sym_0 R_0 x}_{\SobH{j}(U_{\rho})}
$$
for all $x \in \Ck{\infty}(U_{\rho})$ where $\sym_0 R_0$ is a pseudo-differential operator of order zero.

Choose an initial $\epsilon := \epsilon_1 = 1$ and  let $\rho_1$ be the guaranteed parameter.
Since $U_{\rho_1}$ is precompact, \eqref{Eq.InfRegChar1} yields $C_{j,1} > 0$ dependent on $j$ and $\rho_1$ such that
$$
\norm{x}_{\SobH{j+1}(U_{\rho_1})}  \leq C_{j,1} \norm{Dx}_{\SobH{j}(U_{\rho})}
$$
for all $x \in \Ck[c]{\infty}(U_{\rho_1};E)$.
Now, choose $\epsilon := \epsilon_2 =  \nicefrac{1}{2C_{j,1}}$ and let $\rho_2 = \min\set{1,\rho_1}$ be the guaranteed parameter. 
For $x \in \Ck[c]{\infty}(U_{\rho_2};E)$, extending it by $0$ outside of $\rho_2$ since $\spt x \subset U_{\rho_2}$, 
\begin{align*}  
\norm{Dx}_{\SobH{j}(U_{\rho_2})} 
&\leq  \frac{1}{2C_{j,1}} \norm{x}_{\SobH{j}(U_{\rho_2})}  + \norm{D'x}_{\SobH{j}(U_{\rho_2})} + \norm{\sym_0 R_2 x}_{\SobH{j}(U_{\rho_2})} \\ 
&\leq  \frac{1}{2C_{j,1}} C_{j,1} \norm{Dx}_{\SobH{j}(U_{\rho_1})}  + \norm{D'x}_{\SobH{j}(U_{\rho_2})} + \norm{\sym_0 R_2 x}_{\SobH{j}(U_{\rho_2})} \\ 
&\leq  \frac{1}{2} \norm{Dx}_{\SobH{j}(U_{\rho_2})}  + \norm{D'x}_{\SobH{j}(U_{\rho_2})} + \norm{\sym_0 R_2 x}_{\SobH{j}(U_{\rho_2})},
\end{align*}
where the last line follows since $\spt x \subset U_{\rho_2} \subset U_{\rho_1}$. 
Hence, 
$$ 
\frac12 \norm{Dx}_{\SobH{j}(U_{\rho_2})} \leq \norm{D'x}_{\SobH{j}(U_{\rho_2})} + \norm{\sym_0 R_2 x}_{\SobH{j}(U_{\rho_2})}
$$
for all $x \in \Ck[c]{\infty}(U_{\rho_2};E)$ and therefore, 
\begin{equation} 
\label{Eq.HjNormEst}
\norm{Dx}_{\SobH{j}(U_{\rho_2})} \lesssim \norm{D'x}_{\SobH{j}(U_{\rho_2})} + \norm{x}_{\SobH{j}(U_{\rho_2})}.
\end{equation}

Now, let $y \in \dom(D_{\max}') \cap \SobH[0]{j}(U_{\rho_2};E)$.
Define $N := U_{\rho_2} \cup_{\rho_2 \times \dM} (- U_{\rho_2})$ and let $D''$ be the elliptic first-order differential operator on $N$ such that $D'' = D'$ on $U_{\rho_2}$.
By Lemma~\ref{Lem.Density}, there exists a sequence $y_n \in \Ck{\infty}(N;E)$ such that $y_n \to y$ in $\SobH{j}(N;E)$.
Without loss of generality, by using a cutoff, we can assume that $\spt y_n \subset U_{\rho_2}$.
By Equation~\eqref{Eq.HjNormEst}, we obtain that $\norm{D(y_k - y_l)}_{\SobH{j}(U_{\rho_2})} + \norm{y_k - y_k}_{\SobH{j}(U_{\rho_2})} \to 0$.
Therefore, $y \in \dom(D_{\max}) \cap \SobH[0]{j}(U_{\rho_2};E)$ and $Dy \in \SobH{j}(U_{\rho_2};F)$.

Setting $\theta = \rho_2$ for this latter choice of $\epsilon = \epsilon_2$ and setting $x = u$, we obtain $Du \in\SobH{j}(U_{\rho};F) \subset \SobH[loc]{j}(M;F)$.
Hence, \eqref{Eq.InfRegChar1} yields $u \in \SobH[loc]{j+1}(M;F)$ which yields $\eta w \in \SobH[loc]{j+1}(Z_{\infty};E)$.
Therefore, $w = (1 - \eta) w + \eta w \in \SobH[loc]{j+1}(Z_{\infty};E)$.
This concludes the proof of \eqref{Eq.InfRegChar3}.

Next, we show that \eqref{Eq.InfRegChar3} yields that $B$ is $k$-semiregular.
From $j = 0$, \eqref{Eq.InfRegChar1} and \eqref{Eq.InfRegChar2} yield that $B$ is elliptic in the sense of \cite{BBan}.
Therefore, we have $B = W_+ \oplus \set{v + gv\colon V_- \in \SobH{\frac12}(\dM;E)}$ where $g(V_- \cap \SobH{\frac12}(\dM;E)) \subset \SobH{\frac12}(\dM;E)$.
We show further that $g (V_- \cap \SobH{j+\frac12}(\dM;E)) \subset \SobH{j+\frac12}(\dM;E)$.

Fix $v_j \in V_- \cap \SobH{j+\frac12}(\dM;E)$.
Let $\ext (v_j + g v_j)(t) := \exp(-t \modulus{A})(v_j + g v_j)$ on $Z_{\infty}$.
Now, 
\begin{equation} 
\label{Eq.ModSol}
D' \ext (v_j + gv_j) = -2\modulus{A} \exp(-t \modulus{A})v_j = -2 \modulus{A}^{\frac12} \exp(-t\modulus{A}) \modulus{A}^{\frac12} v_j
\end{equation}  
since $g v_j \in \chi^+(A)\Lp{2}(\dM;E)$.
Therefore, for $l \leq j$, 
$$
\partial_t^l D' \ext(v_j + gv_j) = (-1)^{l+1} 2 \modulus{A}^{\frac12} \exp(-t \modulus{A}) \modulus{A}^{l + \frac12} v_j
$$
and
$$
\norm{A}^l D' \ext(v_j + gv_j)  = -2 \modulus{A}^{\frac12} \exp(-t \modulus{A}) \modulus{A}^{l + \frac12}.
$$
Furthermore, from the fact that $\modulus{A}$ has a \Hinfty-functional calculus, 
\begin{align*} 
\int_{0}^\infty \| \modulus{A}^{\frac12} \exp(-t \modulus{A}) &\modulus{A}^{l + \frac12} v_j\|_{\Lp{2}(\dM)}^2\ dt \\
&=
\int_{0}^\infty \norm{ t\modulus{A}^{\frac12} \exp(-t \modulus{A}) \modulus{A}^{l + \frac12} v_j}_{\Lp{2}(\dM)}^2\ \frac{dt}{t} \\
&= 
\int_{0}^\infty \norm{ t\modulus{A}^{\frac12} \exp(-t \modulus{A}) \modulus{A}^{l + \frac12} v_j}_{\Lp{2}(\dM)}^2\ \frac{dt}{t} \\
&\lesssim 
\norm{\modulus{A}^{l + \frac12} v_j}_{\Lp{2}(\dM)}^2
\simeq \norm{v_j}_{\SobH{l+\frac12}(\dM)}^2 .
\end{align*}
Therefore, 
\begin{align*}
\norm{D' \ext(v_j + gv_j}_{\SobH{j}(Z_{\infty})} 
&\simeq 
\sum_{l=0}^{j} \norm{ \partial_t^l D' \ext(v_j + gv_j)}_{\Lp{2}(Z_{\infty})} 
+ \norm{ \modulus{A}^l D' \ext(v_j + gv_j)}_{\Lp{2}(Z_{\infty})} \\
&\lesssim 
\norm{v_j}_{\SobH{j+\frac12}(\dM)}.
\end{align*}
Similarly, for $w_+ \in W_+$, a similar application yields that $W_+ \subset \SobH{j+\frac12}(\dM)$.
Since this is true for all $j = 0, \dots, k-1$, we have that $B$ is $k$-semi-regular.

Applying this construction to $D^\dagger$ with $B^\dagger$ and \eqref{Eq.InfRegChar2} in place of \eqref{Eq.InfRegChar1} to the map $\tilde{g}$ with respect to the induced adapted operator $A^\ast$ for $(D')^\dagger$, which is none other than the adjoint map for $g$, we obtain that $B^\dagger$ is $k$-semi-regular also.
Therefore, $B$ is $k$-elliptically regular.
\end{proof}

\begin{corollary}
\label{Cor.InfReg}
Assume \ref{Hyp.StdFirst}--\ref{Hyp.StdLast}.
Let $B\subset \SobH{\frac12}(\dM;E)$ be a closed subspace.
Then the following are equivalent:
\begin{enumerate}
\item 
$B$ is a $\infty$-regular elliptic boundary condition for $D$;
\item
for all $j\in\N_0$ we have:
\begin{align*}
\dom&(D_{B,\max}) \cap \SobH[loc]{j+1}(M;E) \\
&= \set{u\in\dom(D_{B,\max}): Du\in\SobH[loc]{j}(M;F)\text{ and }u\rest{\dM}\in\SobH{j+\frac12}(M;E)} \,\,\text{and} \\
\dom&(D^\dagger_{B^\dagger,\max})\cap \SobH[loc]{j+1}(M;F)  \\ 
&= \set{u\in\dom(D^\dagger_{B^\dagger,\max}): D^\dagger u\in\SobH[loc]{j}(M;E)\text{ and }u\rest{\dM}\in\SobH{j+\frac12}(M;F)} .
\end{align*} 
\end{enumerate}
\end{corollary}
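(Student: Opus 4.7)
The plan is to derive Corollary~\ref{Cor.InfReg} as a direct consequence of Theorem~\ref{Thm.HigherReg} by observing that $\infty$-regularity is nothing but $k$-regularity for every $k\in\N_0$. Concretely, unpacking \cite{BBan}*{Definition~2.11}, a closed subspace $B\subset\SobH{\frac12}(\dM;E)$ is an $\infty$-regular elliptic boundary condition precisely when it is a $k$-regular elliptic boundary condition for every $k\in\N_0$, since the defining conditions on $g$ and $g^*$ are required to hold at all Sobolev scales $s\ge\frac12$.

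For the implication from $\infty$-regularity to the regularity characterisation, I would fix an arbitrary $j\in\N_0$ and apply Theorem~\ref{Thm.HigherReg} with $k=j+1$. Since $B$ is in particular $(j+1)$-regular, the equivalence in Theorem~\ref{Thm.HigherReg} yields identities \eqref{Eq.InfRegChar1} and \eqref{Eq.InfRegChar2} for that specific $j$. As $j$ was arbitrary, both regularity statements hold for all $j\in\N_0$.

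For the converse, suppose the two Sobolev regularity identities hold for every $j\in\N_0$. Fix $k\in\N_0$. Then in particular they hold for all $j=0,\dots,k-1$, so Theorem~\ref{Thm.HigherReg} (in the direction \eqref{It.HighReg2}$\Rightarrow$\eqref{It.HighReg1}) shows that $B$ is a $k$-regular elliptic boundary condition for $D$. Since this is true for every $k\in\N_0$, the boundary condition $B$ is $\infty$-regular.

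There is essentially no obstacle here: the corollary is a bookkeeping exercise assembling the characterisations from Theorem~\ref{Thm.HigherReg} across all $k$. The only minor care required is to verify that the hypotheses in the two directions of Theorem~\ref{Thm.HigherReg} are indeed quantified consistently with the $\infty$-regular definition, i.e., that the bounds on $g$ and $g^*$ at the Sobolev scales $s=j+\tfrac12$ for all $j$ recover exactly the ``$s\ge\tfrac12$'' condition in Definition~\ref{Def.ER}; this is immediate by interpolation (or simply by the nestedness of the Sobolev spaces on the compact boundary $\dM$).
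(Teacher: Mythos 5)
Your argument is exactly the intended one: the corollary is the observation that $\infty$-regularity in the sense of \cite{BBan}*{Definition~2.11} is the conjunction of $k$-regularity over all $k\in\N_0$, so both directions follow by applying Theorem~\ref{Thm.HigherReg} for each $k$. The paper leaves this as an immediate consequence and your write-up supplies the bookkeeping faithfully.
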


For a given adapted boundary operator $A$ for $D$, the condition $u\rest{\dM} \in \SobH{k+\frac{1}{2}}(\dM;E)$ in Theorem~\ref{Thm.HigherReg} and Corollary~\ref{Cor.InfReg} can be replaced by $\chi^+(A)u\rest{\dM} \in \SobH{k+\frac{1}{2}}(\dM;E)$, see Theorem~\ref{Thm:Trace}~\ref{Thm:Trace4}.


\alphsection{Ellipticity of the Rarita-Schwinger operator}
\label{appendix.RS}

We check that the Rarita-Schwinger operator $D_{\nicefrac32}$ is elliptic using the notation from Example~\ref{Ex:RS}.
The principal symbol is given by 
\begin{equation}
\sym_{D_{\nicefrac32}}(\xi) = (\id_{T^*M\otimes F} - \tilde\iota\circ\tilde\gamma)(\id_{T^*M}\otimes\sym_{D}(\xi))|_{E^{\nicefrac32}}.
\label{eq.RS.symbol}
\end{equation}
The Riemannian metric induces a map $T^*M\otimes T^*M\otimes E \to E$, $\xi\otimes\eta\otimes v\mapsto \xi\lrcorner (\eta\otimes v) = \inprod{\xi,\eta}v$.
Given a covector $\xi\in T^*_xM\setminus\set{0}$, we put
\begin{align*}
E^{\nicefrac32}_x(\xi) &:= \set{\Phi\in E^{\nicefrac32}_x : \xi\lrcorner\Phi=0}, \\
E^{\nicefrac32}_x(\xi)' &:= \set{(\id_{T^*M\otimes F} - \iota\circ\gamma)(\xi\otimes v) : v\in E_x}.
\end{align*}
We then have the orthogonal decomposition 
\begin{equation}
E^{\nicefrac32}_x = E^{\nicefrac32}_x(\xi) \oplus E^{\nicefrac32}_x(\xi)' 
\label{eq.RS.decomposition}
\end{equation}
because $E^{\nicefrac32}_x(\xi)$ is the kernel of the map $E^{\nicefrac32}_x\to E_x$, $\Phi\mapsto \xi\lrcorner\Phi=\xi\lrcorner(\id_{T^*M\otimes F} - \iota\circ\gamma)\Phi$, which is the adjoint of the map $E_x\to E_x^{\nicefrac32}$, $v\mapsto (\id_{T^*M\otimes F} - \iota\circ\gamma)(\xi\otimes v)$.

We compute $\sym_{D_{\nicefrac{3}{2}}}(\xi)^*\sym_{D_{\nicefrac{3}{2}}}(\xi)$ on the spaces $E^{\nicefrac32}_x$ and $E^{\nicefrac32}_x(\xi)'$ separately.
For $\Phi=\sum_i e^i\otimes v_i \in E^{\nicefrac32}_x(\xi)$ we have
\begin{align*}
\sym_{D_{\nicefrac{3}{2}}}&(\xi)^*\sym_{D_{\nicefrac{3}{2}}}(\xi)\Phi \\
&=
(\id_{T^*M\otimes E} - \iota\circ\gamma)(\id_{T^*M}\otimes\sym_{D}(\xi)^*)(\id_{T^*M\otimes F} - \tilde\iota\circ\tilde\gamma)^2(\id_{T^*M}\otimes\sym_{D}(\xi))\Phi \\
&=
(\id_{T^*M\otimes E} - \iota\circ\gamma)(\id_{T^*M}\otimes\sym_{D}(\xi)^*)(\id_{T^*M\otimes F} - \tilde\iota\circ\tilde\gamma)(\id_{T^*M}\otimes\sym_{D}(\xi))\Phi .
\end{align*}
Now
\begin{align*}
(&\id_{T^*M}\otimes\sym_{D}(\xi)^*)(\id_{T^*M\otimes F} - \tilde\iota\circ\tilde\gamma)(\id_{T^*M}\otimes\sym_{D}(\xi))\Phi \\
&=
(\id_{T^*M}\otimes\sym_{D}(\xi)^*)(\id_{T^*M}\otimes\sym_{D}(\xi))\Phi - (\id_{T^*M}\otimes\sym_{D}(\xi)^*)(\tilde\iota\tilde\gamma)\sum_i e^i\otimes \sym_D(\xi)v_i \\
&=
|\xi|^2\Phi - (\id_{T^*M}\otimes\sym_{D}(\xi)^*)\tilde\iota\sum_i \sigma_D(e^i)^* \sym_D(\xi)v_i\\
&=
|\xi|^2\Phi - (\id_{T^*M}\otimes\sym_{D}(\xi)^*)\tilde\iota\sum_i \big(-\sigma_D(\xi)^* \sym_D(e^i)+2\inprod{e^i,\xi}\big)v_i\\
&=
|\xi|^2\Phi - (\id_{T^*M}\otimes\sym_{D}(\xi)^*)\tilde\iota(-\sigma_D(\xi)^*\gamma\Phi + 2\xi\lrcorner\Phi)\\
&=
|\xi|^2\Phi - 2(\id_{T^*M}\otimes\sym_{D}(\xi)^*)\tilde\iota(\xi\lrcorner\Phi) .
\end{align*}
Hence
\begin{align*}
\sym_{D_{\nicefrac{3}{2}}}(\xi)^*\sym_{D_{\nicefrac{3}{2}}}(\xi)\Phi
&=
(\id_{T^*M\otimes E} - \iota\circ\gamma)(|\xi|^2\Phi - 2(\id_{T^*M}\otimes\sym_{D}(\xi)^*)\tilde\iota(\xi\lrcorner\Phi))\\
&=
|\xi|^2\Phi - 2(\id_{T^*M\otimes E} - \iota\circ\gamma)(\id_{T^*M}\otimes\sym_{D}(\xi)^*)\tilde\iota(\xi\lrcorner\Phi).
\end{align*}
Now if $\Phi \in E^{\nicefrac32}_x(\xi)$ we get $\sym_{D_{\nicefrac{3}{2}}}(\xi)^*\sym_{D_{\nicefrac{3}{2}}}(\xi)\Phi=
|\xi|^2\Phi$. 
For $\Phi= (\id_{T^*M\otimes F} - \iota\circ\gamma)(\xi\otimes v)\in E^{\nicefrac32}_x(\xi)'$ we compute 
\begin{align*}
\tilde\iota(\xi\lrcorner\Phi) 
&=
\tilde\iota(\xi\lrcorner(\xi\otimes v - \iota\sym_D(\xi)v)) \\
&=
\tilde\iota\Big(|\xi|^2 v - \tfrac{1}{n}\sum_i\inprod{\xi,e^i}\sym_D(e^i)^*\sym_D(\xi)v\Big) \\
&=
\tilde\iota\Big(|\xi|^2 v - \tfrac{1}{n}\sym_D(\xi)^*\sym_D(\xi)v\Big) \\
&=
\tfrac{n-1}{n}\,|\xi|^2\, \tilde\iota(v) \\
&=
\tfrac{n-1}{n^2}\,|\xi|^2\, \sum_i e^i\otimes \sym_D(e^i) v .
\end{align*}
Therefore,
\begin{align*}
(&\id_{T^*M\otimes E} - \iota\circ\gamma)(\id_{T^*M}\otimes\sym_{D}(\xi)^*)\tilde\iota(\xi\lrcorner\Phi) \\
&=
\tfrac{n-1}{n^2}\,|\xi|^2\,(\id_{T^*M\otimes E} - \iota\circ\gamma)(\id_{T^*M}\otimes\sym_{D}(\xi)^*) \sum_i e^i\otimes \sym_D(e^i) v\\
&=
\tfrac{n-1}{n^2}\,|\xi|^2\,(\id_{T^*M\otimes E} - \iota\circ\gamma)\sum_i e^i\otimes \sym_{D}(\xi)^*\sym_D(e^i) v\\
&=
\tfrac{n-1}{n^2}\,|\xi|^2\, \sum_i\Big( e^i\otimes \sym_{D}(\xi)^*\sym_D(e^i)  - \iota \sym_D(e^i) \sym_{D}(\xi)^*\sym_D(e^i) \Big)v \\
&=
\tfrac{n-1}{n^2}\,|\xi|^2\, \sum_i\Big( e^i\otimes (-\sym_{D}(e^i)^*\sym_D(\xi) + 2\inprod{\xi,e^i})  - \iota \big(-\sym_D(\xi) \sym_{D}(e^i)^* \\
&\qquad\qquad +2\inprod{\xi,e^i}\big)\sym_D(e^i)  \Big)v \\
&=
\tfrac{n-1}{n^2}\,|\xi|^2\, \big( -n\iota\sym_D(\xi) + 2\xi\otimes\cdot  + n \iota \sym_D(\xi)-2\iota\sym_D(\xi)  \big)v \\
&=
2\tfrac{n-1}{n^2}\,|\xi|^2\, \big(  \xi\otimes v  -\iota\sym_D(\xi)v  \big) \\
&=
2\tfrac{n-1}{n^2}\,|\xi|^2\, \Phi .
\end{align*}
Hence
\begin{align*}
\sym_{D_{\nicefrac{3}{2}}}(\xi)^*\sym_{D_{\nicefrac{3}{2}}}(\xi)\Phi
&=
\big(1-2\cdot 2\tfrac{n-1}{n^2}\big)|\xi|^2\, \Phi
=
\Big(\frac{n-2}{n}\Big)^2 |\xi|^2\, \Phi .
\end{align*}
Thus $\sym_{D_{\nicefrac{3}{2}}}(\xi)^*\sym_{D_{\nicefrac{3}{2}}}(\xi)$ has the eigenvalues $|\xi|^2$ and $\big(\frac{n-2}{n}\big)^2 |\xi|^2$ and is therefore invertible if $\xi\neq0$.
This shows that $D_{\nicefrac{3}{2}}$ is not a Dirac-type operator but it is elliptic.

\begin{bibdiv}
\begin{biblist}

\bib{APS-Ann}{article}{
   author={Atiyah, Michael F.},
   author={Patodi, Vijay K.},
   author={Singer, Isidore M.},
   title={Spectral asymmetry and Riemannian geometry},
   journal={Bull. London Math. Soc.},
   volume={5},
   date={1973},
   pages={229--234},
   issn={0024-6093},
   doi={10.1112/blms/5.2.229},
}

\bib{APS2}{article}{
   author={Atiyah, Michael F.},
   author={Patodi, Vijay K.},
   author={Singer, Isidore M.},
   title={Spectral asymmetry and Riemannian geometry. I},
   journal={Math. Proc. Cambridge Philos. Soc.},
   volume={77},
   date={1975},
   number={1},
   pages={43--69},
   issn={0305-0041},
   doi={10.1017/S0305004100049410},
}

\bib{APS1}{article}{
   author={Atiyah, Michael F.},
   author={Patodi, Vijay K.},
   author={Singer, Isidore M.},
   title={Spectral asymmetry and Riemannian geometry. II},
   journal={Math. Proc. Cambridge Philos. Soc.},
   volume={78},
   date={1975},
   number={3},
   pages={405--432},
   issn={0305-0041},
   doi={10.1017/S0305004100051872},
}

\bib{APS3}{article}{
   author={Atiyah, Michael F.},
   author={Patodi, Vijay K.},
   author={Singer, Isidore M.},
   title={Spectral asymmetry and Riemannian geometry. III},
   journal={Math. Proc. Cambridge Philos. Soc.},
   volume={79},
   date={1976},
   number={1},
   pages={71--99},
   issn={0305-0041},
   doi={10.1017/S0305004100052105},
}

\bib{BB12}{article}{
    AUTHOR = {Bär, Christian},
    AUTHOR = {Ballmann, Werner},
     TITLE = {Boundary value problems for elliptic differential operators of first order},
 BOOKTITLE = {in: H.-D.\ Cao and S.-T.\ Yau (eds.), Surveys in differential geometry. {V}ol.~{XVII}, Int. Press, Boston, MA},
    SERIES = {Surv. Differ. Geom.},
    VOLUME = {17},
     PAGES = {1--78},
 PUBLISHER = {Int. Press, Boston, MA},
      YEAR = {2012},
       DOI = {10.4310/SDG.2012.v17.n1.a1},
}

\bib{BBGuide}{article}{
 author={Bär, Christian},
 author={Ballmann, Werner},
 isbn={978-3-319-43646-3},
 isbn={978-3-319-43648-7},
 book={
 title={Arbeitstagung Bonn 2013. In memory of Friedrich Hirzebruch. Proceedings of the meeting, Bonn, Germany, May, 22--28, 2013},
 publisher={Basel: Birkh\"auser/Springer},
 },
 doi={10.1007/978-3-319-43648-7\_3},
 title={Guide to elliptic boundary value problems for Dirac-type operators},
 pages={43--80},
 date={2016},
}

\bib{BBan}{article}{
   author={Bär, Christian},
   author={Bandara, Lashi},
   title={Boundary value problems for general first-order elliptic differential operators},
   journal={J. Funct. Anal.},
   volume={282},
   date={2022},
   number={12},
   pages={Paper No. 109445, 69},
   issn={0022-1236},
   doi={10.1016/j.jfa.2022.109445},
}

\bib{BBan2}{article}{
 author={Bär, Christian},
 author={Bandara, Lashi},
 journal = {Math. Ann. (online first)},
 title={First-order elliptic boundary value problems on manifolds with noncompact boundary},
 year={2025},
 doi={10.1007/s00208-025-03290-8},
}

\bib{BBC}{article}{
    AUTHOR = {Ballmann, Werner},
    AUTHOR = {Br\"uning, Jochen},
    AUTHOR = {Carron, Gilles},
     TITLE = {Regularity and index theory for {D}irac-{S}chr\"odinger systems with {L}ipschitz coefficients},
   JOURNAL = {J. Math. Pures Appl. (9)},
    VOLUME = {89},
      YEAR = {2008},
    NUMBER = {5},
     PAGES = {429--476},
      ISSN = {0021-7824},
       DOI = {10.1016/j.matpur.2008.02.004},
       URL = {https://doi.org/10.1016/j.matpur.2008.02.004},
}

\bib{BRelIndex}{article}{
 author={Bandara, Lashi},
 issn={1050-6926},
 issn={1559-002X},
 doi={10.1007/s12220-022-01048-1},
 title={The relative index theorem for general first-order elliptic operators},
 journal={The Journal of Geometric Analysis},
 volume={33},
 number={1},
 pages={10},
 date={2023},
 publisher={Springer US, New York, NY; Mathematica Josephina, St. Louis, MO},
}

\bib{BGS}{article}{
 author={Bandara, Lashi},
 author={Goffeng, Magnus},
 author={Saratchandran, Hemanth},
 issn={0001-8708},
 issn={1090-2082},
 doi={10.1016/j.aim.2023.108968},
 title={Realisations of elliptic operators on compact manifolds with boundary},
 journal={Advances in Mathematics},
 volume={420},
 pages={123},
 date={2023},
 publisher={Elsevier (Academic Press), San Diego, CA},
}

\bib{BLZ}{article}{
   author={Boo\ss -Bavnbek, Bernhelm},
   author={Lesch, Matthias},
   author={Zhu, Chaofeng},
   title={The Calder\'{o}n projection: new definition and applications},
   journal={J. Geom. Phys.},
   volume={59},
   date={2009},
   number={7},
   pages={784--826},
   issn={0393-0440},
   doi={10.1016/j.geomphys.2009.03.012},
}

\bib{B}{article}{
    Author = {Louis {Boutet de Monvel}},
    Title = {{Boundary problems for pseudo-differential operators.}},
    Journal = {{Acta Math.}},
    ISSN = {0001-5962; 1871-2509/e},
    Volume = {126},
    Pages = {11--51},
    Year = {1971},
    Publisher = {International Press of Boston, Somerville, MA; Institut Mittag-Leffler, Stockholm},
    doi = {10.1007/BF02392024},
}

\bib{BL2001}{article}{
    AUTHOR = {Br\"{u}ning, Jochen},
    AUTHOR = {Lesch, Matthias},
     TITLE = {On boundary value problems for {D}irac type operators. {I}. {R}egularity and self-adjointness},
   JOURNAL = {J. Funct. Anal.},
    VOLUME = {185},
      YEAR = {2001},
    NUMBER = {1},
     PAGES = {1--62},
      ISSN = {0022-1236},
       DOI = {10.1006/jfan.2001.3753},
       URL = {https://doi.org/10.1006/jfan.2001.3753},
}

\bib{G96}{book}{
   author={Grubb, Gerd},
   title={Functional calculus of pseudo-differential boundary problems},
   series={Progress in Mathematics},
   volume={65},
   edition={2},
   publisher={Birkh\"{a}user Boston, Inc., Boston, MA},
   date={1996},
   pages={x+522},
   isbn={0-8176-3738-9},
   doi={10.1007/978-1-4612-0769-6},
}

\bib{Lic}{article}{
   author={Lichnerowicz, Andr{\'e}},
   title={Spineurs harmoniques},
   journal={C. R. Acad. Sci. Paris},
   volume={257},
   date={1963},
   pages={7--9},
}

\bib{Melrose}{book}{
   author={Melrose, Richard B.},
   title={The Atiyah-Patodi-Singer index theorem},
   series={Research Notes in Mathematics},
   volume={4},
   publisher={A K Peters, Ltd., Wellesley, MA},
   date={1993},
   pages={xiv+377},
   isbn={1-56881-002-4},
   doi={10.1016/0377-0257(93)80040-i},
}

\bib{RS}{book}{
   author={Rempel, Stephan},
   author={Schulze, Bert-Wolfgang},
   title={Index theory of elliptic boundary problems},
   note={Reprint of the 1982 edition},
   publisher={North Oxford Academic Publishing Co. Ltd., London},
   date={1985},
   pages={393},
   isbn={0-946536-85-6},
   doi={10.1016/0165-1765(84)90131-9},
}	

\bib{S01}{article}{
   author={Schulze, Bert-Wolfgang},
   title={An algebra of boundary value problems not requiring  Shapiro-Lopatinskij conditions},
   journal={J. Funct. Anal.},
   volume={179},
   date={2001},
   number={2},
   pages={374--408},
   issn={0022-1236},
   doi={10.1006/jfan.2000.3684},
}
\bib{S04}{article}{
   author={Schulze, Bert-Wolfgang},
   title={Toeplitz operators, and ellipticity of boundary value problems
   with global projection conditions},
   conference={
      title={Aspects of boundary problems in analysis and geometry},
   },
   book={
      series={Oper. Theory Adv. Appl.},
      volume={151},
      publisher={Birkh\"{a}user, Basel},
   },
   date={2004},
   pages={342--429},
}

\end{biblist}
\end{bibdiv}

\setlength{\parskip}{0pt}
\end{document}